\documentclass[12pt]{amsart}
\usepackage{amsfonts,amscd,latexsym,amsmath,amssymb,cancel,enumerate}
\theoremstyle{plain}
\newtheorem{master}{Master}[section]

\newtheorem{thm}[master]{Theorem}
\newtheorem{fact}[master]{Fact}
\newtheorem{lem}[master]{Lemma}

\newtheorem{claim}[master]{Claim}
\theoremstyle{definition}
\newtheorem{defin}[master]{Definition}

\theoremstyle{remark}
\newtheorem{remark}[master]{Remark}
\numberwithin{equation}{section}
\newcommand{\Ur}{\mathbb{U}}
\newcommand{\Rea}{\mathbb{R}}
\newcommand{\Nat}{\mathbb{N}}
\newcommand{\Rat}{\mathbb{Q}}
\newcommand{\Age}{\mathcal{K}}

\newcommand{\Hol}{\mathbb{H}}
\newcommand{\Gur}{\mathbb{G}}
\newcommand{\Span}{\mathrm{span}}
\newcommand{\norm}{\|\cdot\|}
\newcommand{\dist}{\mathrm{dist}}
\begin{document}
\title[Structures on the Urysohn and Gurarij spaces]{Universal and homogeneous structures on the Urysohn and Gurarij spaces}
\author{Michal Doucha}
\address{Institute of Mathematics\\ Polish Academy of Sciences\\
00-656 Warszawa, Poland}
\address{Laboratoire de Math\' ematiques de Besan\c con\\Universit\' e de Franche-Comt\' e\\France}
\email{m.doucha@post.cz}
\thanks{The author was supported by funds allocated to the implementation of the international co-funded project in the years 2014-2018, 3038/7.PR/2014/2, and by the EU grant PCOFUND-GA-2012-600415, and by the region Franche-Comt\' e, France.}
\keywords{Urysohn space, Gurarij space, Fra\" iss\' e theory}
\subjclass[2010]{03C98, 54E50, 46B04, 46A13}
\begin{abstract}
Using Fra\" iss\' e theoretic methods we enrich the Urysohn universal space by universal and homogeneous closed relations, retractions, closed subsets of the product of the Urysohn space itself and some fixed compact metric space, $L$-Lipschitz map to a fixed Polish metric space. The latter lifts to a universal linear operator of norm $L$ on the Lispchitz-free space of the Urysohn space.

Moreover, we enrich the Gurarij space by a universal and homogeneous closed subspace and norm one projection onto a $1$-complemented subspace. We construct the Gurarij space by the classical Fra\" iss\' e theoretic approach.
\end{abstract}
\maketitle
\section*{Introduction}
The Urysohn space and the Gurarij space are two main examples of universal and homogeneous metric structures. The former, in the category of separable complete metric spaces, was constructed by P. Urysohn already in 1927 in \cite{Ury}, while the latter, in the category of separable Banach spaces, was constructed by Gurarij in 1966 in \cite{Gu}. The characterizing property of the Urysohn space $\Ur$ is that for any finite subset $M$ (including the empty subset) of $\Ur$, any `abstract' finite metric extension $M'$ of $M$ can be realized in $\Ur$. This property also implies that $\Ur$ contains an isometric copy of any separable metric space. The characterizing property of the Gurarij space is that for any finite dimensional subspace $X$ of $\Gur$, any `abstract' finite dimensional extension $X'$ can be realized with an `$\varepsilon$-accuracy' for any $\varepsilon>0$. This property also implies that $\Gur$ contains a linearly isometric copy of any separable normed vector space.

The aim of this paper is to enrich the Urysohn and Gurarij spaces by some additional structure so that they remain universal and homogeneous with that added structure. There are several motivations for proving results of this type. First, by constructing an object (on the Urysohn or Gurarij space) of a certain type which is universal and homogeneous, one gets a better understanding of the whole class of all objects of such a type, especially how they are approximated by objects from the simpler subclass of finitely generated ones.

Secondly, these results provide a general way of coding metric structures. The common theme in modern descriptive set theory is to consider a class of certain countable or separable structures and decide the complexity of various equivalence relations on this class, e.g. the relation of isometry on the class of Polish metric spaces, the relation of isomorphism on the class of separable Banach spaces, etc. The starting point is to represent such a class as a Polish, resp. standard Borel, space. One approach is to take a universal object of the class and then to consider a `space of all its subobjects'. In this way Gao and Kechris (in \cite{GaoKe}) initiated the investigation of the isometry relation on Polish metric spaces. Their standard Borel space of all Polish metric space was the Effros-Borel space (see \cite{Ke} as a reference for notions of descriptive set theory) of the closed subsets of the Urysohn space. Analogously, Ferenczi, Louveau and Rosendal (in \cite{FLR}) studied the isomorphism relation on 
separable Banach spaces by taking the set of all closed subspaces of $C([0,1])$, which is a Borel subset of the Effros-Borel space of closed subsets of $C([0,1])$. We provide new universal objects whose Effros-Borel spaces of closed subsets can serve in the same way. Especially with our first result, mentioned below, on universal closed relations on the Urysohn space we had in mind this purpose. The Urysohn space with finitely many universal closed relation of an arbitrary arity, resp. its Effros-Borel space of closed subsets, can be used to code general Polish metric structures.

To mention some results of that kind, we refer the reader to \cite{GarKub} and \cite{CGK}, where the Gurarij space, resp. the $p$-Gurarij space, were enriched by a universal and homogeneous linear operator. We also mention the author's constructions of universal metric groups in \cite{Do} and \cite{Do2} which can also be viewed as an enriching the Urysohn space by group structures.\\

We roughly summarize the main results below. The precise statements are in the appropriate sections.
\begin{thm}
The Urysohn universal metric space can be enriched:
\begin{itemize}
\item with finitely many universal and homogeneous closed relations of an arbitrary arity,
\item with a universal and homogeneous $1$-Lipschitz retraction onto a universal and homogeneous retract subspace,
\item with a universal and homogeneous closed subset of the product of itself (the Urysohn space) and an arbitrary fixed compact metric space,
\item with a universal and homogeneous $L$-Lipschitz function to any fixed Polish metric space, for any fixed $L>0$; moreover, when the fixed Polish metric space is in fact a Banach space, then the $L$-Lipschitz functions lifts to a universal linear operator of norm $L$ from the Holmes space to that fixed Banach space.

\end{itemize}
\end{thm}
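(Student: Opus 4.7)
The plan is to carry out, for each of the four enrichments, a Fra\"iss\'e-type construction at the level of finite (or rational-valued) structures and then identify the metric reduct of the resulting limit with $\Ur$. In each case the class in question consists of finite rational metric spaces augmented with a finite piece of the relevant additional data (closed relations; retractions onto a sub-space; finite subsets of the product with the compact space; or Lipschitz functions valued in a chosen countable dense subspace of the target), with morphisms being isometric embeddings that preserve the extra structure. The metric amalgamation underlying each step is the Kat\v{e}tov shortest-path formula; since this is the amalgamation used for the classical Urysohn construction, the metric reduct of the resulting enriched limit will automatically be $\Ur$.

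For each bullet the critical step will be to verify amalgamation: given two enriched structures $A_1, A_2$ sharing a common substructure $C$, produce an enriched $A$ containing both over $C$. For closed relations I would amalgamate the underlying metric spaces by Kat\v{e}tov and take the $n$-ary relation on $A$ to be the union of its restrictions to the two amalgamands, with the closedness of the relation in the limit obtained by a standard completion step. For a $1$-Lipschitz retraction $r\colon X \to Y$, I would work with pairs $(X, Y, r)$, amalgamate the retracts $Y_1, Y_2$ first, and then amalgamate $X_1, X_2$ over this larger retract via a shortest-path formula that routes cross-pair distances through the retract images of the endpoints, keeping the natural extension of $r$ exactly $1$-Lipschitz. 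For a closed subset of $X \times L$ with $L$ a fixed compact space, I would fix a countable dense $L_0 \subseteq L$, consider finite subsets $S \subseteq X \times L_0$, amalgamate by Kat\v{e}tov on $X$ and union on $S$, and close up at the completion stage. For an $L$-Lipschitz map to a Polish space $P$, I would fix a countable dense subspace of $P$ and take the Lipschitz function on $A$ to be the union of the given two; the Kat\v{e}tov formula together with the triangle inequality in $P$ guarantees $\dist(f(x_1), f(x_2)) \le L \cdot d(x_1, x_2)$ for cross-pair points.

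The main technical obstacle will be, in every case, checking that the Kat\v{e}tov amalgamation of metrics interacts correctly with the additional data, and organizing the class so that only countably many isomorphism types of finite objects arise (so that the classical countable Fra\"iss\'e machinery applies); hereditary and joint embedding properties are routine once the class is set up. A further subtle point in the second bullet will be to argue that the image of the retraction is itself universal and homogeneous, which I would handle by separately verifying that the class of ``retract parts'' is again a Fra\"iss\'e class whose metric reduct is $\Ur$, so that the retract is an isometric copy of $\Ur$ sitting inside $\Ur$.

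For the linearization assertion at the end of the fourth bullet, the Holmes space $\Hol$ is the Lipschitz-free space over $\Ur$ (with a fixed basepoint), and by its universal property $L$-Lipschitz basepoint-preserving maps $\Ur \to E$ into a Banach space $E$ correspond bijectively with bounded linear operators $\Hol \to E$ of norm at most $L$. Applied to the universal $L$-Lipschitz map $f\colon \Ur \to E$ built above, this will produce a bounded linear operator $T_f\colon \Hol \to E$ of norm exactly $L$; universality as a linear operator will follow because any norm-$L$ operator $S\colon \Hol \to E'$ restricts along the canonical embedding $\Ur \hookrightarrow \Hol$ to an $L$-Lipschitz map, which by universality of $f$ factors through $f$ and hence induces a factorization of $S$ through $T_f$.
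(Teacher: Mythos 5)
There is a genuine gap in how you encode the closed sets. For the first and third bullets you propose to record the relation on a finite structure simply as a distinguished subset (of $A^{n}$, resp.\ of $A\times L_0$) and to amalgamate by taking unions, recovering the closed set at the end as the closure of the generic countable set. This fails: with membership-only data and free amalgamation, the one-point extension property of the Fra\"iss\'e limit forces the distinguished set to be \emph{dense} (every abstract one-point extension declaring the new point, or new tuple, to be ``in'' the relation must be realized near every finite configuration), so its closure is all of $\Ur^{n}$, resp.\ all of $\Ur\times K$, and no universality for proper closed subsets can follow. The paper avoids this by making the \emph{distance function} to the desired closed set part of the structure: each finite rational metric space carries a rational-valued $1$-Lipschitz function $p_{n_i}:A^{n_i}\to\Rat^+_0$ (resp.\ $f:A\times\Nat\to\Rat^+_0$), embeddings preserve these functions, amalgamation extends them by the greatest $1$-Lipschitz extension, and the closed set is recovered in the completion as the zero set of the limit function. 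For $\Ur\times K$ there is the additional countability obstruction you do not address: arbitrary rational distance functions on $A\times\Nat$ form an uncountable family, and the paper restricts to \emph{finitely-controlled} ones, using compactness of $K$ (via finite $\varepsilon$-nets) to get the almost-one-point extension property back. Your retraction amalgamation is also problematic as stated: routing cross-pair distances through the retract images can violate the triangle inequality witnessed by a common point $c$ lying outside the retract (take $x_1$ very close to $c$ but far from the retract, and $x_2$ at moderate distance from $c$); the paper instead takes the ordinary greatest metric amalgam over the whole common substructure and checks that the naturally extended $r$ (and the distance function to the retract) remain $1$-Lipschitz.

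Two further points. First, in every bullet the passage from the rational Fra\"iss\'e limit to the completed structure is where the real work lies, and ``close up at the completion stage'' does not suffice: the paper isolates an \emph{almost-one-point extension property}, proves it for the rational class, shows it passes to completions, and shows that for complete substructures it upgrades to the exact one-point extension property; universality, homogeneity and uniqueness are then derived from the latter. Second, for the linearization in the fourth bullet, the universal property of the Lipschitz-free space only converts the $L$-Lipschitz map $F:\Ur\to Z$ into an operator $\Phi:\Hol\to Z$; to show $\Phi$ is universal one must, given $\psi:X\to Z$ with $\|\psi\|\le L$, produce a \emph{linear} isometric embedding of $X$ into $\Hol$ intertwining $\psi$ with $\Phi$, whereas the universality of $F$ only yields a nonlinear isometric copy of $X$ in $\Ur$. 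The paper bridges this with the Godefroy--Kalton theorem that the canonical quotient $\beta:F(X)\to X$ admits a linear isometric section $\iota_{GK}$; your sketch, which instead restricts operators defined on $\Hol$ back to $\Ur$, proves a different statement and omits this essential ingredient.
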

\begin{thm}
The Gurarij universal Banach space can be enriched:
\begin{itemize}
\item with a universal and homogeneous closed subspace,
\item with a universal and homogeneous norm one projection onto a universal and homogeneous $1$-complemented subspace.

\end{itemize}
\end{thm}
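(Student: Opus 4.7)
The plan is to repeat the classical Fra\"iss\'e construction of the Gurarij space, but in the category of suitably enriched finite-dimensional Banach spaces, in parallel with what is done in the Urysohn sections of the paper. For the first item I would take the class $\Age$ whose objects are pairs $(X,Y)$, where $X$ is a finite-dimensional Banach space and $Y\subseteq X$ is a subspace, and whose morphisms are linear isometric embeddings $T\colon X_1\to X_2$ with $T(Y_1)\subseteq Y_2$. The joint embedding property is immediate. The key step is the approximate amalgamation: given embeddings $(X_0,Y_0)\hookrightarrow (X_i,Y_i)$ for $i=1,2$ and $\varepsilon>0$, I would first amalgamate $Y_1$ and $Y_2$ over $Y_0$ using the standard Gurarij-type amalgamation of finite-dimensional spaces to obtain $Y_3$, and then amalgamate $X_1$ and $X_2$ over $X_0$ by equipping the algebraic pushout with the maximal admissible norm that restricts to the already chosen norm on $Y_3$. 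Because the maximal admissible norm on the pushout automatically extends any consistent choice of norm on a common subspace, this yields a pair $(X_3,Y_3)\in\Age$ witnessing $\varepsilon$-amalgamation.

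For the second item the analogous class consists of pairs $(X,P)$, where $X$ is a finite-dimensional Banach space and $P\colon X\to X$ is a norm-one linear idempotent, with morphisms the isometric embeddings $T$ satisfying $TP_1=P_2T$. Given $(X_0,P_0)\hookrightarrow(X_i,P_i)$ for $i=1,2$, I would use the direct-sum decompositions $X_i=P_i(X_i)\oplus\ker P_i$, amalgamate the ranges over $P_0(X_0)$ and the kernels over $\ker P_0$ separately to obtain spaces $R$ and $K$, and define $X_3:=R\oplus K$ with $P_3$ the projection onto $R$ along $K$. The nontrivial part is choosing the norm on $X_3$: I would take the largest admissible cross-norm on the pushout whose unit ball is invariant under the algebraic projection $P_3$, thereby guaranteeing $\|P_3\|=1$ automatically; admissibility ensures that the inclusions of $X_1$ and $X_2$ are $\varepsilon$-isometric. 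Once this near-amalgamation is established, the enriched Fra\"iss\'e limit produces a separable structure whose completion yields the desired pair $(\Gur,P)$ together with its $1$-complemented range $P(\Gur)$; universality follows because every separable Banach space with a norm-one projection embeds into the inductive chain, and homogeneity follows from the Fra\"iss\'e property together with Gurarij's usual device of upgrading $\varepsilon$-homogeneity to honest homogeneity by a back-and-forth with rapidly decreasing tolerances.

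The main obstacle in both parts, and especially in the projection case, is the simultaneous control of the norm on the amalgam: one must preserve (up to $\varepsilon$) the isometric structure on $X_1$ and $X_2$, make the distinguished subspace or projection behave correctly under the gluing, and keep the resulting class closed under these amalgamations so that a Fra\"iss\'e limit can be formed. Verifying that the maximal admissible norm on the pushout genuinely extends the prescribed norm on $Y_3$, respectively preserves norm-one of $P_3$, is the calculation that I expect to occupy most of the work; everything else, including the passage from the finite-dimensional limit to the Banach space $\Gur$ and the verification of universality of the extra structure among all separable Banach pairs, then follows by routine Fra\"iss\'e-theoretic arguments analogous to those already used for the Urysohn enrichments in the previous sections.
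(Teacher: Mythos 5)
Your overall strategy---an enriched Fra\"iss\'e class of finite-dimensional spaces amalgamated via the maximal admissible norm on the algebraic pushout---is the right one, and your observation that this maximal norm automatically keeps the amalgamated projection of norm one is correct; this is exactly the computation the paper relies on, in parallel with the $1$-Lipschitz retraction on the Urysohn space. The gap is in how you set up the category for the distinguished subspace, and it propagates to both items. Your morphisms for pairs $(X,Y)$ only require $T(Y_1)\subseteq Y_2$; they do not control $\dist(\cdot,Y)$. With such morphisms the universality you get in the limit is essentially vacuous: it says only that every pair $(X,Y)$ embeds with $\phi(Y)\subseteq\mathbb{H}$, which $\mathbb{H}=\Gur$ already satisfies. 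The statement actually proved (Theorem \ref{Guthm1}) is the stronger one, $\dist(x,Y)=\dist(\phi(x),\mathbb{H})$ for all $x\in X$, and for that the embeddings in the class must preserve the distance function to the distinguished subspace. This is why the paper takes as primitive datum not the subspace $Y$ but the $1$-Lipschitz seminorm $S=\dist(\cdot,Y)$, with ($\varepsilon$-)morphisms required to preserve the seminorm; the amalgam seminorm is then the greatest common extension, which leaves the values on $X_1$ and $X_2$ untouched.

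Relatedly, your two-step amalgamation is internally inconsistent. Any norm on the pushout $X_1\oplus_{X_0}X_2$ extending the norms of $X_1$ and $X_2$ must satisfy $\|y_1-y_2\|\leq\inf_{z\in X_0}(\|y_1-z\|_{X_1}+\|z-y_2\|_{X_2})$, and since this infimum ranges over all of $X_0$ rather than only over $Y_0$, it can be strictly smaller than the norm you prescribed on $Y_3$ by amalgamating $Y_1$ and $Y_2$ over $Y_0$ alone (this already happens when $Y_i\cap X_0$ is strictly larger than $Y_0$, which your morphisms permit). Hence the ``maximal admissible norm restricting to the already chosen norm on $Y_3$'' need not exist. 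The same issue affects the projection case: the norm on $X_3$ is not determined by separately amalgamated ranges and kernels, and $\dist(x,P(X))$ is only equivalent to, not equal to, $\|x-Px\|$, which is why the paper carries a compatible seminorm along with the projection as part of the structure. Finally, the paper deliberately works with a genuinely countable discrete class (specified bases, rational partial norms and seminorms, proper embeddings) admitting exact amalgamation, and only afterwards proves the $\varepsilon$-extension property of the completion; your direct $\varepsilon$-amalgamation of the class of all pairs would in addition require the machinery of approximate Fra\"iss\'e limits, which you have not supplied.
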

\section{Preliminaries}
We expect the reader to be familiar with basic Fra\" iss\' e theoretic constructions. At least knowing the standard Fra\" iss\' e theoretic construction of the Urysohn space is important for understanding our construction (as opposed to the other popular construction due to Kat\v etov in \cite{Kat}). We refer the reader to Chapter 5 of the book \cite{Pe} which is devoted to the Urysohn space and contains a construction of this space which is in the same spirit as our constructions here. For a general exposition of Fra\" iss\' e theory, we refer the reader to Chapter 7 of \cite{Ho} which is devoted to this topic, and then to \cite{Kub} that contains a general category-theoretic approach to Fra\" iss\' e theory.\\

We make a brief overview of Fra\" iss\' e theory here. Let $\Age$ be a class of (finitely generated) structures of some type and moreover, let $\mathcal{E}$ be some class of distinguished embeddings between structures of $\Age$. We say that $(\Age,\mathcal{E})$
\begin{itemize}
\item is countable if it contains only countably many structures up to isomorphism from $\mathcal{E}$,
\item has the joint-embedding property if for every $A,B\in \Age$ there is $C\in \Age$ and embeddings of $A$, resp. $B$, into $C$ that belong to $\mathcal{E}$,
\item has the amalgamation property if for every $A,B,C$ and embeddings $\iota_B:A\hookrightarrow B$ and $\iota_C:A\hookrightarrow C$ from $\mathcal{E}$ there exist $D\in \Age$ and embeddings $\rho_B:B\hookrightarrow D$ and $\rho_C:C\hookrightarrow D$ from $\mathcal{E}$ such that $\rho_C\circ\iota_C=\rho_B\circ \iota_B$.
\end{itemize}
If $(\Age,\mathcal{E})$ satisfies all these conditions then we call it a Fra\" iss\' e class. The following is the classical Fra\" iss\' e theorem. Let us note that when $A\in \Age$ and $K$ is a direct limit $K_1\to K_2\to\ldots$ of structures from $\Age$, then by saying that an embedding $\iota:A\hookrightarrow K$ belongs to $\mathcal{E}$ we mean that there exists $n$ so that $\iota\in \mathcal{E}$ in fact goes from $A$ to $K_n$
\begin{thm}[Fra\" iss\' e theorem]\label{thmFraisse}
Let $(\Age,\mathcal{E})$ be a Fra\" iss\' e class. Then there exists a limit structure $K$, called the Fra\" iss\' e limit, which is a direct limit of the form $K_1\to K_2\to\ldots$, where $K_i\in \Age$, for every $i$, and the embedding between $K_i$ and $K_{i+1}$ is in $\mathcal{E}$, for every $i$. The following properties characterize it up to isomorphism (among countable, resp. separable, structures of the same type):
\begin{itemize}
\item for every $A\in \Age$ there exists an embedding $\iota_A:A\hookrightarrow K$ from $\mathcal{E}$,
\item for every $A,B\in \Age$ and embeddings $\iota_A:A\hookrightarrow K$ and $\rho: A\hookrightarrow B$ from $\mathcal{E}$ there exists an embedding $\iota_B:B\hookrightarrow K$ from $\mathcal{E}$ such that $\iota_B\circ \rho=\iota_A$.

\end{itemize}
\end{thm}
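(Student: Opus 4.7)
My plan is a standard two-step argument: a bookkeeping construction to produce $K$, followed by a back-and-forth argument for uniqueness. Since $(\Age,\mathcal{E})$ is countable, I would first fix an enumeration $(A_n,\rho_n,B_n)_{n\in\Nat}$ of all \emph{extension problems}, i.e.\ triples with $A_n,B_n\in\Age$ and $\rho_n:A_n\hookrightarrow B_n$ in $\mathcal{E}$, arranging that every isomorphism type appears infinitely often.

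For the construction, I would choose $K_1\in\Age$ (using the joint-embedding property) to contain the first scheduled object. Inductively, given $K_n$ together with embeddings $K_i\hookrightarrow K_{i+1}$ in $\mathcal{E}$ for $i<n$, a bookkeeping scheme selects a pending pair consisting of an embedding $\iota:A\hookrightarrow K_n$ in $\mathcal{E}$ together with an extension $\rho:A\hookrightarrow B$ in $\mathcal{E}$ from the enumeration, and I apply the amalgamation property to $\iota$ and $\rho$ to produce $K_{n+1}\in\Age$ with embeddings $\sigma_n:K_n\hookrightarrow K_{n+1}$ and $\tau:B\hookrightarrow K_{n+1}$ in $\mathcal{E}$ satisfying $\tau\circ\rho=\sigma_n\circ\iota$. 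Taking $K$ to be the direct limit, the first conclusion (every $A\in\Age$ embeds in $K$ via $\mathcal{E}$) follows from the joint-embedding property together with trivial extensions scheduled into the enumeration, while the extension property follows because any embedding $\iota_A:A\hookrightarrow K$ factors through some $K_n$ by the convention recalled just before the theorem, and the pair $(\iota_A,\rho)$ is addressed at some later stage.

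For uniqueness, assume $K'$ is another structure with these properties, written as a direct limit $K'_1\to K'_2\to\ldots$ of structures in $\Age$. I would build an increasing chain of partial isomorphisms $f_m$ between initial segments of $K$ and $K'$ by a back-and-forth alternation: at odd steps I extend the domain to include the next $K_n$ by applying the extension property inside $K'$ to the current partial embedding $K_{n-1}\hookrightarrow K'$ together with the inclusion $K_{n-1}\hookrightarrow K_n$; at even steps I symmetrically extend the range to include the next $K'_n$ using the extension property inside $K$. The union of the $f_m$ is the required isomorphism.

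The main obstacle is the bookkeeping: one has to choose the enumeration so that for every $n$, every embedding $\iota:A\hookrightarrow K_n$ in $\mathcal{E}$ paired with every extension $\rho:A\hookrightarrow B$ in $\mathcal{E}$ is genuinely addressed at some later stage. Countability of $\Age$ together with finiteness of relevant data at each finite stage makes this possible by a standard diagonal enumeration, but checking that nothing is missed (and that triples already handled up to isomorphism may be skipped) requires some care. Once this is in place, both the existence proof and the back-and-forth proof are routine applications of the amalgamation and extension properties.
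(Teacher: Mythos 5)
The paper does not prove this statement: it is presented as the classical Fra\" iss\' e theorem, with the reader referred to \cite{Ho} and \cite{Kub} for the argument. Your proposal is the standard bookkeeping-plus-back-and-forth proof found in those references and is correct; the only points deserving care (the first of which you already flag) are that the dovetailing must revisit every stage infinitely often, since new embeddings $A\hookrightarrow K_n$ from $\mathcal{E}$ keep appearing as the chain grows, and that the back step of the uniqueness argument tacitly uses that $\mathcal{E}$ is closed under composition and under inverses of isomorphisms.
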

One can derive two additional properties of $K$ from those two stated above:
\begin{enumerate}
\item If $L$ is a structure that is a direct limit $L_1\to L_2\to\ldots$, where for each $i$, $L_i\in \Age$, and the embedding from $L_i$ to $L_{i+1}$ is from $\mathcal{E}$, then there exists an embedding of $L$ into $K$.
\item If $A,B\in \Age$ are isomorphic and embedded into $K$ via $\iota_A:A\hookrightarrow K$ and $\iota_B:B\hookrightarrow K$ from $\mathcal{E}$, then there exists an automorphism of $K$ that sends $\iota_A[A]$ onto $\iota_B[B]$.

\end{enumerate}
The first property is called the \emph{universality} of $K$; the latter is called the \emph{homogeneity}, or sometimes ultrahomogeneity, of $K$.\\

The best example for us is the following. Let $\Age$ be the class of all finite metric spaces with rational distances and consider the class $\mathcal{E}$ of all isometric embeddings between them. Then $(\Age,\mathcal{E})$ is a Fra\" iss\' e class with the limit being the \emph{rational Urysohn space} $\Rat \Ur$. We get from Theorem \ref{thmFraisse} the following characterization of $\Rat\Ur$.
\begin{fact}\label{charUry}
$\Rat\Ur$ is the unique countable rational metric space with the property that for every finite subspace $A$ and its finite extension, which is still rational, this extension of $A$ is actually realized within $\Rat\Ur$.
\end{fact}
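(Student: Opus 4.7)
The plan is to deduce this directly from Theorem \ref{thmFraisse}. First I would check that the pair $(\Age,\mathcal{E})$, where $\Age$ is the class of finite metric spaces with rational distances and $\mathcal{E}$ consists of all isometric embeddings, actually forms a Fra\"iss\'e class. Countability (up to isomorphism) is clear since a finite rational metric space is determined by finitely many rationals. The joint embedding property is a special case of the amalgamation property with $A=\emptyset$, so the only nontrivial thing to verify is amalgamation: given isometric embeddings $\iota_B:A\hookrightarrow B$ and $\iota_C:A\hookrightarrow C$ of finite rational metric spaces, one defines a rational metric on the pushout $B\sqcup_A C$ by taking, for $b\in B\setminus\iota_B[A]$ and $c\in C\setminus\iota_C[A]$, the shortest-path distance
\[
d(b,c)=\min_{a\in A}\bigl(d_B(b,\iota_B(a))+d_C(\iota_C(a),c)\bigr),
\]
and leaving the distances inside $B$ and $C$ unchanged. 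A standard verification shows this is a rational metric extending both $B$ and $C$.

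Once this is established, Theorem \ref{thmFraisse} produces a Fra\"iss\'e limit, which I would declare to be $\Rat\Ur$. The extension property in Fact \ref{charUry} is then just an unpacking of the second bullet of Theorem \ref{thmFraisse}: if $A$ is a finite subspace of $\Rat\Ur$ (embedded via the identity) and $B$ is a finite rational extension of $A$, the theorem gives an isometric embedding $\iota_B:B\hookrightarrow\Rat\Ur$ restricting to the inclusion on $A$, i.e.\ the extension is realized inside $\Rat\Ur$.

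For the uniqueness part, I would run a standard back-and-forth argument. Let $X$ and $Y$ be two countable rational metric spaces satisfying the extension property, and enumerate $X=\{x_n\}_{n\in\Nat}$ and $Y=\{y_n\}_{n\in\Nat}$. Build inductively a chain of finite partial isometries $f_0\subset f_1\subset\ldots$ between finite subspaces of $X$ and $Y$: at odd stages ensure $x_n\in\mathrm{dom}(f_n)$ by applying the extension property in $Y$ to realize the one-point rational extension $\mathrm{dom}(f_{n-1})\cup\{x_n\}$ of $\mathrm{range}(f_{n-1})$, and symmetrically at even stages. The union $\bigcup_n f_n$ is then a bijective isometry $X\to Y$.

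The main conceptual obstacle is really just the amalgamation step, since the abstract Fra\"iss\'e machinery does all the rest of the work; the shortest-path formula above has to be checked to satisfy the triangle inequality and to restrict correctly to $B$ and $C$, but this is routine. Everything else is formal.
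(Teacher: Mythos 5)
Your proposal is correct and matches the paper's (largely implicit) argument: the paper likewise obtains Fact \ref{charUry} by observing that finite rational metric spaces with isometric embeddings form a Fra\"iss\'e class --- with amalgamation given by exactly the ``greatest'' shortest-path amalgam of Definition \ref{definamal} --- and then invoking Theorem \ref{thmFraisse}, whose uniqueness clause is the back-and-forth you describe. The only detail worth noting is that to seed the back-and-forth (and to embed a first point) one should read ``finite subspace $A$'' as allowing $A=\emptyset$, as the paper does explicitly elsewhere.
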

The property from Fact \ref{charUry} is called the rational finite-extension property, or in the case when this extension is just by one point, the \emph{rational one-point extension property}. Obviously, one-point extension property implies finite-extension property.

The metric completion of $\Rat\Ur$ is the Urysohn space $\Ur$. We again refer to Chapter 5 of \cite{Pe} where this is proved.

In the section on the Gurarij space, we shall present a new construction of the Gurarij space that is Fra\" iss\' e theoretic in this classical sense. That will help us use similar ideas and techniques that we use for enriching the Urysohn space  to enrich the Gurarij space by an additional structure as well.\\

Since we will amalgamate metric structures often in the sequel, we make a brief description of that procedure here. 
\begin{defin}[(Greatest) metric amalgamation]\label{definamal}
Suppose we are given metric spaces $X_0,X_1,X_2$ such that $X_0$ is a subspace of both $X_1$ and $X_2$. The underlying set of the amalgam $X_3$ will be the disjoint union $X_0\coprod (X_1\setminus X_0)\coprod (X_2\setminus X_0)$. We need to define the distance between $x$ and $y$, where $x\in X_1\setminus X_0$ and $y\in X_2\setminus X_0$. In order to satisfy the triangle inequalities, we need to have $$\sup_{z\in X_0} |d_{X_1}(x,z)-d_{X_2}(z,y)|\leq d(x,y)\leq \inf_{z\in X_0} d_{X_1}(x,z)+d_{X_2}(z,y).$$ If we take for all such pairs the greatest extreme, then it is straightforward to check that this defines a metric on $X_3$ extending those on $X_1$ and $X_2$, and that such a metric is the greatest possible. If $X_1\setminus X_0=\{x\}$ and $X_2\setminus X_0=\{y\}$ then we can of course define the distance between $x$ and $y$ to be any number in between these two extremes.
\end{defin}

Let us also note that whenever we consider some metric on a product of metric spaces we mean the \emph{sum metric}.
\section{The Urysohn space}
In this section, we prove the results concerning the Urysohn space. Theorems are stated rather informally as it is convenient for readers familiar with the Fra\" iss\' e theory. In the remarks below the statements of theorems, we explain more all the technical details.
\begin{thm}\label{thm1}
Let $n_1\leq \ldots \leq n_m$ be an arbitrary non-decreasing sequence of natural numbers. Then there exist closed relations (subsets) $F_{n_i}\subseteq \Ur^{n_i}$, for $i\leq m$, such that the structure $(\Ur,F_{n_1},\ldots,F_{n_m})$ is universal and ultrahomogeneous, and it is unique (up to an isometry preserving the relations) with this property.
\end{thm}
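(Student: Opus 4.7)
The plan is to set up a classical Fra\"iss\'e construction in which the finite rational metric spaces carry additional rational-valued $1$-Lipschitz functions that encode the distance-to-relation functions in the limit. Concretely, let $\Age$ be the class of tuples $(X,f_1,\ldots,f_m)$ where $X$ is a finite rational metric space and each $f_i\colon X^{n_i}\to \Rat_{\ge 0}$ is a rational-valued $1$-Lipschitz map with respect to the sum metric on $X^{n_i}$, and let $\mathcal{E}$ be the class of isometric embeddings that preserve the $f_i$ exactly. The reason for working with functions rather than directly with finite subsets $R_i\subseteq X^{n_i}$ is that the Lipschitz distance information is what must converge through the metric completion to a closed relation in $\Ur$; a subset-valued approach would not allow one to track how far a tuple \emph{ought to be} from the target relation at the finite stage.

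Countability and joint embedding are routine. For amalgamation, given $(X_0,f_i^0)\hookrightarrow (X_j,f_i^j)$ for $j=1,2$, I first form the greatest metric amalgam $X_3$ from Definition \ref{definamal}, whose distances stay rational because $X_0$ is finite, and then extend the functions via
\[
f_i^3(\bar{x}) \;=\; \min\Bigl\{\min_{\bar{y}\in X_1^{n_i}}\bigl(d_3(\bar{x},\bar{y})+f_i^1(\bar{y})\bigr),\;\min_{\bar{y}\in X_2^{n_i}}\bigl(d_3(\bar{x},\bar{y})+f_i^2(\bar{y})\bigr)\Bigr\}.
\]
This remains rational and $1$-Lipschitz, and the agreement $f_i^1|_{X_0^{n_i}}=f_i^2|_{X_0^{n_i}}$ combined with the optimal-path characterization of $d_3$ across the amalgam shows $f_i^3|_{X_j^{n_i}}=f_i^j$. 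Theorem \ref{thmFraisse} then produces a countable limit $(X^*,f_1^*,\ldots,f_m^*)$, and since every finite rational metric space embeds in $\Age$ (take the $f_i$ large and constant), Fact \ref{charUry} identifies $X^*=\Rat\Ur$. Completing the metric extends each $f_i^*$ uniquely to a $1$-Lipschitz $\tilde{f}_i\colon \Ur^{n_i}\to\Rea_{\ge 0}$, and I set $F_{n_i}:=\tilde{f}_i^{-1}(0)$, which is closed.

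Universality and ultrahomogeneity of $(\Ur, F_{n_1},\ldots,F_{n_m})$ then follow from those of the Fra\"iss\'e limit. For universality of a Polish $(Y,R_1,\ldots,R_m)$ with closed $R_i\subseteq Y^{n_i}$, consider the $1$-Lipschitz functions $g_i(\bar{y}):=\dist(\bar{y},R_i)$ and approximate $(Y,g_i)$ by a direct limit of finite rational structures in $\Age$; this embeds into $(X^*,f_i^*)$ and hence into $(\Ur,\tilde{f}_i)$, carrying $R_i=g_i^{-1}(0)$ into $F_{n_i}=\tilde{f}_i^{-1}(0)$. Ultrahomogeneity is obtained by a standard back-and-forth using the homogeneity of $(X^*,f_i^*)$ at each stage. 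The main obstacle I expect is the twin verification that the amalgam formula genuinely extends each $f_i^j$ (which pins down the coordinate-wise optimal path through $X_0$ in the greatest amalgam and uses the $1$-Lipschitz property inside $X_1$ and $X_2$ separately), and that the zero set $F_{n_i}$ of $\tilde{f}_i$ is rich enough that $\tilde{f}_i(\bar{x})=\dist(\bar{x},F_{n_i})$; the latter, needed both for universality and for making the $F_{n_i}$ behave as genuine closed relations, is established by using the Fra\"iss\'e one-point extension to attach, for each $\bar{x}\in\Rat\Ur^{n_i}$ with $f_i^*(\bar{x})=r$, a tuple $\bar{y}$ at distance exactly $r$ from $\bar{x}$ carrying $f$-value zero.
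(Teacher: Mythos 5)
Your construction of the Fra\" iss\' e class, the amalgamation via the greatest metric amalgam together with the greatest $1$-Lipschitz extension of the $f_i$, the identification of the limit with $\Rat\Ur$, and the definition of $F_{n_i}$ as the zero set of the completed functions all coincide with the paper's proof. The genuine gap is in the final step, where you claim universality for an arbitrary Polish structure $(Y,R_1,\ldots,R_m)$ by ``approximating $(Y,g_i)$ by a direct limit of finite rational structures in $\Age$'' and then invoking the universality of the Fra\" iss\' e limit. A Polish space with real-valued distances and real-valued $g_i$ is not a direct limit of members of your class (whose distances and function values are rational), so no exact embedding of $(Y,g_i)$ factors through $(X^*,f_i^*)$; an approximation is not an embedding, and the universality property of the Fra\" iss\' e limit applies only to genuine direct limits from the class. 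The same problem affects the back-and-forth for ultrahomogeneity and for uniqueness: the finite configurations inside $\Ur$ that must be matched carry arbitrary real distances and real function values.

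What is needed --- and what constitutes the bulk of the paper's proof --- is to show that the completion $(\Ur,\tilde f_1,\ldots,\tilde f_m)$ itself has the \emph{exact} one-point extension property with respect to the class $\overline{\Age_1}$ of real-valued finite structures. The paper does this by introducing an almost-one-point extension property and proving three claims: (i) the rational class realizes any real one-point extension up to $\varepsilon$ --- this requires a careful perturbation in which the new distances $d(a_j,b')$ are rationals chosen in staggered intervals above $d(a_j,b)$ after sorting so that the triangle inequality survives, and the new $p_{n_i}$-values are raised by at most half the minimal distance increment so that $1$-Lipschitzness survives; (ii) the almost-one-point extension property passes to metric completions; (iii) for a complete substructure the almost version upgrades to the exact version. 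Your proposal contains none of this machinery, and without it the passage from the rational limit to the statement of the theorem does not go through. (Separately, your observation that one should verify $\tilde f_i=\dist(\cdot,F_{n_i})$ --- so that homogeneity of the functional structure really is homogeneity of the relational one --- is a legitimate point which the paper leaves implicit, and your proposed one-point-extension argument for it is reasonable.)
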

\begin{remark}
Let $n_1\leq \ldots \leq n_m$ be like in the statement of the theorem. Then $(\Ur,F_{n_1},\ldots,F_{n_m})$ is uniquely characterized by the following property: let $A_1,A_2\subseteq (\Ur,F_{n_1},\ldots,F_{n_m})$ be two finite isomorphic substructures; i.e. viewing $A_,A_2$ as finite subsets of $\Ur$ there is an isometry $\iota:A_1\rightarrow A_2$ such that for each $i\leq m$ and each $\vec{x}\in A_1^{n_i}$ we have $\dist(\vec{x},F_{n_i})=\dist(\iota(\vec{x}),F_{n_i})$. Then $\iota$ extends to an automorphism of $(\Ur,F_{n_1},\ldots,F_{n_m})$, i.e. an autoisometry of $\Ur$ that preserves distances to the closed sets $F_{n_i}$, for every $i\leq m$.

As a consequence, we have the following universality of\\ $(\Ur,F_{n_1},\ldots,F_{n_m})$. Let $X$ be any Polish metric space equipped with closed relations $G_{n_1},\ldots,G_{n_m}$ of the same arity, i.e. for each $i\leq m$, $G_{n_i}$ is a closed subset of $X^{n_i}$. Then there exists an isometric embedding $\iota:X\hookrightarrow \Ur$ such that for each $i\leq m$ and $\vec{x}\in X^{n_i}$ we have $\dist(\vec{x},G_{n_i})=\dist(\iota(\vec{x},F_{n_i})$.
\end{remark}
\begin{proof}
Consider the countable class $\Age_1$ of finite rational metric structures defined as follows. We have $A\in \Age_1$ if the following is satisfied:
\begin{itemize}
\item $A$ is a finite rational metric space,
\item for each $i\leq m$ there is a $1$-Lipschitz rational function $p_i: A^{n_i}\rightarrow \Rat^+_0$ which interprets as a distance function from the desired closed set $F_{n_i}$.

\end{itemize}
The class of embeddings consists of \emph{all} isometric embeddings between elements of $\Age_1$ that preserve the functions $p_i$. We shall thus write just $\Age_1$ instead of full $(\Age_1,\mathcal{E}_1)$.

To check that $\Age_1$ is a Fra\" iss\' e class we have to verify that it is countable and has the joint and amalgamation properties. Since all the functions take values in rationals, it is countable.

We check the amalgamation property. The joint embedding property is similar, just easier. Let $A,B,C\in \Age_1$ be structures such that $A$ is a common substructure of both $B$ and $C$. The underlying metric space of the amalgam $D$ will be the greatest metric amalgamation of $B$ and $C$ over $A$ as defined in Definition \ref{definamal}. For each $i\leq m$ we have to extend $p_{n_i}$ from $B^{n_i}\cup C^{n_i}$ onto $D^{n_i}$. We take the greatest $1$-Lipschitz extension of $p_{n_i}$; i.e. for any $\vec{d}\in D^{n_i}$ we set $$p_{n_i}(\vec{d})=\min \{p_{n_i}(\vec{a})+d(\vec{d},\vec{a}):\vec{a}\in B^{n_i}\cup C^{n_i}\}.$$ That is a standard way of extension of real valued (resp. in this case, rational valued) Lipschitz functions, so we leave to the reader to check that it works.

It follows that $\Age_1$ has a Fra\" iss\' e limit. The limit is a countable rational metric space $M$ together with rational $1$-Lipschitz functions $\Rat P_{n_i}:M\rightarrow \Rat^+_0$, for every $i\leq m$. However, the metric space $M$ is actually the rational Urysohn space $\Rat \Ur$. That immediately follows from Fact \ref{charUry} by checking that $M$ has the metric rational one-point extension property. Denote also by $\Rat F_{n_i}$, for each $i\leq m$, the set $\{x\in \Rat\Ur^{n_i}:\Rat P_{n_i}(x)=0\}$. It follows from the Fra\" iss\' e theorem that the metric structure $(\Rat\Ur, \Rat P_{n_1},\ldots,\Rat P_{n_m})$ has the following universality and \emph{rational one-point extension} properties:
\begin{itemize}
\item for every $A\in \Age_1$ there exists an embedding\\ $\iota:A\hookrightarrow (\Rat\Ur, \Rat P_{n_1},\ldots)$, i.e. for each $i\leq m$ and for every $\vec{a}\in A^{n_i}$ we have $p_{n_i}(\vec{a})=\Rat P(\iota(\vec{a}))$,
\item for every $A\in \Age_1$, every embedding $\iota:A\hookrightarrow (\Rat\Ur,\Rat P_{n_1},\ldots)$, and every one-point extension $A\subseteq B\in \Age_1$ there exists an extension $\iota\subseteq \tilde{\iota}:B\hookrightarrow (\Rat\Ur,\Rat P_{n_1},\ldots)$.

\end{itemize}
We now take the completion of this rational structure to obtain\\ $(\Ur, P_{n_1},\ldots,P_{n_m})$, where $\Ur$ is the Urysohn space, the completion of $\Rat\Ur$, and for each $i\leq m$, $P_{n_i}$ is the unique extension of $\Rat P_{n_i}$ onto $\Ur$. Denote also by $F_{n_i}$ the set $\overline{\Rat F_{n_i}}=\{x\in \Ur^{n_i}:P_{n_i}(x)=0\}$, i.e. the closure of $\Rat F_{n_i}$ in $\Ur^{n_i}$.

Let $\overline{\Age_1}$ denote the `real' version of $\Age_1$, i.e. finite structures of the same type as those in $\Age_1$ with the difference that we allow all functions there, including the metric, to take values in all the reals, not only in the rationals. In order to prove Theorem \ref{thm1} we show that $(\Ur, P_{n_1},\ldots,P_{n_m})$ has the same universality and one-point extension properties with respect to $\overline{\Age_1}$ as $(\Rat\Ur,\Rat P_{n_1},\ldots)$ does with respect to $\Age_1$. That is the content of the next lemma.
\begin{lem}\label{thm1lem1}
For every $A\in \overline{\Age_1}\cup\{\emptyset\}$, every embedding $\iota:A\hookrightarrow (\Ur,P_{n_1},\ldots)$, and every one-point extension $A\subseteq B\in \overline{\Age_1}$ there exists an extension $\iota\subseteq \tilde{\iota}:B\hookrightarrow (\Ur, P_{n_1},\ldots)$.
\end{lem}
By allowing $A$ in the lemma to be empty, we get also the `real version' of universality property, i.e. that for every $A\in \overline{\Age_1}$ there is an embedding $\iota:A\hookrightarrow (\Ur, P_{n_1},\ldots)$. Once Lemma \ref{thm1lem1} is proved, we are done. Indeed, the universality, ultrahomogeneity and uniqueness of $(\Ur, P_{n_1},\ldots)$ follow from it by the similar arguments for proving universality, ultrahomogeneity and uniqueness of the Urysohn space using the one-point extension property. For a reader not familiar with such arguments, we provide a sketch of the proof of universality.

Let $(X,G_{n_1},\ldots,G_{n_m})$ be a Polish metric structure where $X$ is a Polish metric space and for each $i\leq m$, $G_{n_i}$ is a closed subset of $X^{n_i}$. Denote also by $Q_{n_i}:X^{n_i}\rightarrow \Rea^+_0$ the distance function from $G_{n_i}$. Let $D=\{d_n:n\in \Nat\}\subseteq X$ be some countable dense subset. Using Lemma \ref{thm1lem1} inductively, we build an increasing sequence of embeddings $\iota_1\subseteq \iota_2\subseteq\ldots$, where for each $j$, $\iota_j:\{d_1,\ldots,d_j\}\hookrightarrow (\Ur, P_{n_1},\ldots)$ is isometric, and such that for each $i\leq m$ and every $\vec{d}\in \{d_1,\ldots,d_j\}^{n_i}$ we have $Q_{n_i}(\vec{d})=P_{n_i}(\iota_j(\vec{d}))$. Then we take $\iota=\bigcup_j \iota_j:D\rightarrow (\Ur, P_{n_1},\ldots)$. Since $\iota$ is an isometry, we can extend it to $\bar{\iota}:X\rightarrow (\Ur, P_{n_1},\ldots)$. Since for each $i\leq m$, $Q_{n_i}$ is $1$-Lipschitz, we also get that for every $\vec{x}\in X^{n_i}$ we have that $Q_{n_i}(\vec{x})=P_{n_i}(\bar{\iota}(\vec{x})
)
$. In particular, for each $\vec{x}\in X^{n_i}$ we have that $\vec{x}\in G_{n_i}$ if and only if $\bar{\iota}(\vec{x})\in F_{n_i}$, and we are done. Homogeneity and uniqueness are done similarly. We refer the reader again to Chapter 5 in \cite{Pe} where these facts are proved for the plain Urysohn space.\\

Thus we are left to prove Lemma \ref{thm1lem1}. We will do it in a series of three claims. We need one definition before.
\begin{defin}\label{thm1def}
We say that a class $K\subseteq \overline{\Age_1}$ has an almost-one-point extension property if for every $A\in K$, every one-point extension $A\subseteq B=A\coprod \{b\}\in \overline{\Age_1}$ and every $\varepsilon>0$ there exists an extension $B'=A\coprod \{b'\}\in K$ such that for every $a\in A$ we have $|d(a,b)-d(a,b')|<\varepsilon$, and for every $i\leq m$ and $\vec{a}\in B^{n_i}$ we have $|p_{n_i}(\vec{a})-p_{n_i}(\vec{a'})|<\varepsilon\cdot n_i$, where $\vec{a'}$ is obtained from $\vec{a}$ by replacing each occurrence of $b$ by $b'$.

Analogously, we say that a substructure $(U,P_{n_1}\upharpoonright U^{n_1},\ldots,P_{n_m}\upharpoonright U^{n_m})$ has an almost-one-point extension property if for every $A\in \overline{\Age_1}$, every embedding $\iota:A\hookrightarrow (U,P_{n_1}\upharpoonright U^{n_1},\ldots)$, every $\varepsilon>0$ and every one-point extension $A\subseteq B=A\coprod \{b\}\in \overline{\Age_1}$ there exists an extension $B'=\iota[A]\coprod \{b'\}\subseteq (U,P_{n_1}\upharpoonright U^{n_1},\ldots)$ such that for every $a\in A$ we have $|d(a,b)-d(\iota(a),b')|<\varepsilon$, and for every $i\leq m$ and $\vec{a}\in B^{n_i}$ we have $|p_{n_i}(\vec{a})-p_{n_i}(\vec{a'})|<\varepsilon\cdot n_i$, where $\vec{a'}$ is obtained from $\vec{a}$ by replacing each occurrence of $b$ by $b'$ and each occurrence of $a_j$ by $\iota(a_j)$, for every $j\leq n$.
\end{defin}
\begin{claim}\label{thm1claim1}
$\Age_1$ has the almost-one-point extension property.
\end{claim}
\begin{claim}\label{thm1claim2}
If a (not necessarily complete) substructure\\ $(U,P_{n_1}\upharpoonright U^{n_i},\ldots,P_{n_m}\upharpoonright U^{n_m})\subseteq (\Ur, P_{n_1},\ldots)$ has an almost-one-point extension property, then so does its completion.
\end{claim}
\begin{claim}\label{thm1claim3}
If a complete substructure $(U,P_{n_1}\upharpoonright U^{n_i},\ldots,P_{n_m}\upharpoonright U^{n_m})\subseteq (\Ur, P_{n_1},\ldots)$ has an almost-one-point extension property, then it has the one-point extension property.
\end{claim}
Lemma \ref{thm1lem1} follows from these claims. Note at first, that the statement that $\Age_1$ has an almost-one-point extension property is equivalent with the statement that $(\Rat \Ur,\Rat P_{n_1},\ldots)$ has an almost-one-point extension property. By Claim \ref{thm1claim1}, $(\Rat \Ur,\Rat P_{n_1},\ldots)$ has an almost-one-point extension property. Then by Claim \ref{thm1claim2}, its completion, $(\Ur, P_{n_1},\ldots)$ has also an almost-one-point extension property and using Claim \ref{thm1claim3} it must thus have the one-point extension property.\\

Let us now prove Claim \ref{thm1claim1}. Let $A\in \Age_1$ and $\varepsilon>0$ and let $A\subseteq B\in \overline{\Age_1}$ be some one-point extension. Let $f:A\rightarrow \Rea^+$ be the function $d(\cdot,b)$. Enumerate $A$ as $a_1,\ldots,a_n$ so that we have $f(a_1)\geq f(a_2)\geq\ldots\geq f(a_n)$. For each $j\leq n$ let $q_j$ be an arbitrary rational number in the interval $\left(f(a_j)+(j-1)\cdot \varepsilon/n,f(a_j)+j\cdot\varepsilon/n\right]$. Let $B'=A\coprod \{b'\}$ be a one-point extension of $A$ such that for every $j\leq n$ we have $d(a_j,b')=q_j$. The triangle inequalities are satisfied. Indeed, for every $j<k\leq n$ we have $$|d(a_j,b')-d(a_k,b')|=|q_j-q_k|=q_j-q_k\leq f(a_j)-f(a_k)\leq d(a_j,a_k)\leq$$ $$f(a_j)+f(a_k)\leq q_j+q_k=d(a_j,b')+d(a_k,b').$$
For each $i\leq m$ we also have to define $p_{n_i}$ on $(B')^{n_i}\setminus A^{n_i}$. Note that for every $a\in A$ we have $d(a,b)<d(a,b')$, thus let $\delta=\min\{d(a,b')-d(a,b):a\in A\}$. For every $\vec{a}\in B^{n_i}\setminus A^{n_i}$ let $\vec{a'}$ be the corresponding tuple from $(B')^{n_i}\setminus A^{n_i}$, i.e. where each occurrence of $b$ is replaced by $b'$. For each $\vec{a}\in B^{n_i}\setminus A^{n_i}$ we set $p_{n_i}(\vec{a'})$ to be an arbitrary rational in the interval $[p_{n_i}(\vec{a}),p_{n_i}(\vec{a})+\delta/2]$. Then for any $\vec{a},\vec{b}\in B^{n_i}\setminus A^{n_i}$ we have $$|p_{n_i}(\vec{a'})-p_{n_i}(\vec{b'})|\leq |p_{n_i}(\vec{a})-p_{n_i}(\vec{b})|+\delta\leq d(\vec{a},\vec{b})+\delta\leq d(\vec{a'},\vec{b'})$$ verifying that $p_{n_i}$ is $1$-Lipschitz. It follows that $B'\in \Age_1$ and it is as desired. That finishes the proof of Claim \ref{thm1claim1}.

Proof of Claim \ref{thm1claim2} is the same as the proof of Lemma 5.1.15 in \cite{Pe} and proof of Claim \ref{thm1claim3} is the same as the proof of Lemma 5.1.16 in \cite{Pe}, thus we refer the reader there.
\end{proof}
\begin{thm}\label{thm2}
There exists a universal and ultrahomogeneous $1$-Lipschitz retraction $R:\Ur\rightarrow F_\Ur\subsetneq \Ur$, where $F_\Ur$ is again isometric to $\Ur$.
\end{thm}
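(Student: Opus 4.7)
The plan is to follow the template of Theorem~\ref{thm1} with the signature augmented by an idempotent $1$-Lipschitz map. Let $\Age_2$ consist of finite rational metric spaces $A$ endowed with a $1$-Lipschitz rational map $r_A\colon A\to A$ satisfying $r_A\circ r_A=r_A$, and set $F_A:=r_A(A)$. Morphisms are rational isometric embeddings $\iota\colon A\hookrightarrow B$ with $\iota\circ r_A=r_B\circ\iota$. Countability is immediate.

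For the amalgamation property, given $A\subseteq B,C$ in $\Age_2$ I would take the greatest metric amalgam $D$ of Definition~\ref{definamal} and glue the two retractions: $r_D:=r_B$ on $B$ and $r_D:=r_C$ on $C$, which is consistent because $r_B$ and $r_C$ both restrict to $r_A$ on $A$. Only the $1$-Lipschitz check for a pair $x\in B\setminus A$, $y\in C\setminus A$ is non-trivial. For any $z\in A$ the morphism conditions force $r_A(z)=r_B(z)=r_C(z)\in A$, and hence by the triangle inequality in $D$ together with the $1$-Lipschitz property of $r_B,r_C$,
$$d_D\bigl(r_B(x),r_C(y)\bigr)\leq d_B\bigl(r_B(x),r_B(z)\bigr)+d_C\bigl(r_C(z),r_C(y)\bigr)\leq d_B(x,z)+d_C(z,y);$$
taking the infimum over $z\in A$ and comparing with the defining formula for $d_D$ yields $d_D(r_D(x),r_D(y))\leq d_D(x,y)$ as required. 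Thus $\Age_2$ is a Fra\"iss\'e class; its limit provides a countable rational structure $(M,\Rat R)$ with $\Rat R\colon M\to\Rat F\subseteq M$. By Fact~\ref{charUry}, $M$ is $\Rat\Ur$, and similarly $\Rat F$ is $\Rat\Ur$ (rational one-point extensions inside $\Rat F$ are realized by applying the one-point extension property of $(M,\Rat R)$ to structures where the new point is declared fixed). Completing gives $(\Ur,R)$ with $F_\Ur:=\overline{\Rat F}$ isometric to $\Ur$, and $F_\Ur\subsetneq\Ur$ because $\Age_2$ contains two-point structures with $F_A\subsetneq A$ and $R$ is continuous so any limit of fixed points is fixed.

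The remaining task is to lift this rational data to the real setting, i.e.\ to prove the direct analog of Lemma~\ref{thm1lem1}; universality with respect to arbitrary Polish $1$-Lipschitz retractions, together with ultrahomogeneity and uniqueness, then follow by the back-and-forth sketched after Lemma~\ref{thm1lem1}. I would re-run Claims~\ref{thm1claim1}--\ref{thm1claim3} verbatim; only the almost-one-point extension (Claim~\ref{thm1claim1}) requires new content. Given $A\in\Age_2$, a one-point extension $B=A\coprod\{b\}\in\overline{\Age_2}$ and $\varepsilon>0$, I would choose the rational distances $q_j=d(a_j,b')$ exactly as in Claim~\ref{thm1claim1} after ordering $d(a_1,b)\geq\dots\geq d(a_n,b)$. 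For the retraction on $B'=A\coprod\{b'\}$ I split into two cases: if $b\in F_B$, declare $b'\in F_{B'}$; otherwise set $r_{B'}(b'):=r_B(b)\in F_A$. The triangle inequalities for the enlarged metric go through as in Claim~\ref{thm1claim1}, and $1$-Lipschitzness of $r_{B'}$ on a new pair $(a_j,b')$ reduces to $d(r_{B'}(b'),r_A(a_j))\leq d(b,a_j)\leq q_j$, which is automatic because $r_B$ was $1$-Lipschitz and $q_j>d(a_j,b)$.

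The main obstacle is the $1$-Lipschitz check for the glued retraction in the amalgamation step; every other ingredient is a routine transcription of the proof of Theorem~\ref{thm1}, with the single bit of added bookkeeping that each newly introduced point is tagged either as lying in $F$ or as having a prescribed image there.
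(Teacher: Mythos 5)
Your overall strategy is the paper's: a countable Fra\"iss\'e class of finite rational metric spaces carrying a retraction, amalgamated via the greatest metric amalgam of Definition~\ref{definamal}, followed by the three-claim passage (class-level almost extension property, inheritance by the completion, upgrade to exact one-point extensions). Your amalgamation computation and your class-level almost extension are fine. The gap is in the step you dismiss as ``verbatim'': the completion claim is precisely the one the paper singles out as needing a new argument for retractions, and your formulation of the almost-one-point extension property makes it fail. You insist that the approximate extension be a genuine one-point extension $B'=A\coprod\{b'\}$ with $r_{B'}(b')$ \emph{exactly} equal to the prescribed point $r_B(b)\in A$. For the abstract class this is harmless, but for a substructure $(U,R\upharpoonright U)$ the value $R(b')$ is dictated by the ambient retraction, and when $A$ is a finite subset of the completion $\bar{U}$ not contained in $U$, the only available realization (replace each $a\in A$ by a nearby $u(a)\in U$, close under $R$, amalgamate with $B$ over $A$, realize in $U$) produces a $b'$ with $R(b')=u(r_B(b))$ --- close to, but in general different from, $r_B(b)$, hence a \emph{new} point outside $\iota[A]$. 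So the completion does not inherit your strict property by any analogue of Claim~\ref{thm1claim2}. The paper's fix is built into its definition: the almost extension is allowed to be a two-point extension $A\coprod\{b',R(b')\}$ with only $d(R(b'),R(b))<\varepsilon$ demanded, and Claim~\ref{thm2claim2} is then proved by exactly the closure-under-$R$-and-amalgamate argument above, with the extra bookkeeping that the approximating set may be strictly larger than $A$ (since $R(a_1)=R(a_2)$ does not force $R(u(a_1))=R(u(a_2))$). With that relaxation, Claim~\ref{thm2claim3} converges both $b_n$ and $R(b_n)$, using continuity of $R$ to identify the limit of the latter with $R$ of the former. You should adopt the two-point formulation; your plan then goes through.

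A secondary divergence: the paper's class also carries a rational $1$-Lipschitz function $p$ interpreting the distance to the retract, with $A_F=\{p=0\}$ and $r:A\to A_F$, and embeddings preserve $p$. This extra predicate is what makes $F_\Ur$ a universal closed subset in the sense of Theorem~\ref{thm1}, i.e.\ one can prescribe $\dist(x,F_\Ur)$ when embedding a space with a retraction. Your reduced signature produces a genuinely different limit: for instance, the extension property in your class forces every $x$ with $d(x,R(x))=1$ to satisfy $\dist(x,F_\Ur)=1/2$, whereas the paper's structure contains such points with any distance in $[1/2,1]$ to the retract (and is therefore \emph{not} homogeneous over your reduced signature). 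Your version still plausibly yields a universal and ultrahomogeneous retraction in the literal sense of the statement, but it is not the object the paper builds, and it loses the advertised universality of the retract as a closed subspace; this should at least be flagged.
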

\begin{remark}
$(\Ur,R,F_\Ur)$ is uniquely characterized by the following property: let $A_1,A_2\subseteq \Ur$ be two finite subsets that are isomorphic, i.e. there exists an isometry $\iota:A_1\rightarrow A_2$ such that
\begin{itemize}
\item for every $x\in A_1$ we have $\iota\circ R(x)=R\circ\iota(x)$,
\item for every $x\in A_1$ we have $\dist(x,F_\Ur)=\dist(\iota(x),F_\Ur)$.
\end{itemize}
Then $\iota$ extends to an autoisometry of $\Ur$ still commuting with the retraction $R$ and preserving the distance from $F_\Ur$.

As a consequence we get the following universality property of $(\Ur,R,F_\Ur)$. Let $X$ be any Polish metric space equipped with a $1$-Lipschitz retraction $Q:X\rightarrow F_X$. Then there exists an isometric embedding $\iota:X\rightarrow \Ur$ such that for any $x\in X$ we have $R\circ\iota(x)=\iota\circ Q(x)$, and $\dist(x,F_X)=\dist(\iota(x),F_\Ur)$.
\end{remark}
\begin{proof}
Here we consider the following countable class $\Age_2$ of finite rational metric structures. A finite structure $A$ belongs to $\Age_2$ if
\begin{itemize}
\item $A$ is a rational metric space,
\item there is a rational $1$-Lipschitz function $p:A\rightarrow \Rat^+_0$ that interprets as a distance function from the desired universal retract,
\item there is a $1$-Lipschitz retraction $r:A\rightarrow A_F$, where $A_F=\{a\in A: p(A)=0\}$.

\end{itemize}
We again consider all the embeddings that preserve metric (i.e. they are isometric) and the functions $p$ and $r$.

It is clear that $\Age_2$ is countable. We again show just the amalgamation property. Suppose we have structures $A,B,C\in \Age_2$, where we assume that $A$ is a common substructure of both $B$ and $C$. As in the proof of Theorem \ref{thm1}, we take the greatest metric amalgam $D$ of $B$ and $C$ over $A$ and show that it works. All we need to do is to check that $p$ and $r$ on $D$ are still $1$-Lipschitz which is analogous to the proof that functions $p_{n_i}$'s on amalgams remain $1$-Lipschitz from the proof of the previous theorem.

Thus we get some a Fra\" iss\' e limit $(\Rat \Ur,\Rat R, \Rat P)$, where again $\Rat \Ur$ is the limit as a metric space which is again the rational Urysohn space. $\Rat R$ is the limit of the retractions and $\Rat P$ is the limit of the distance function $p$. By $F_{\Rat\Ur}\subseteq \Rat\Ur$ we denote the set $\{a\in \Rat\Ur:\Rat P(a)=0\}=\{a\in \Rat\Ur: \Rat R(a)=a\}$, i.e. the universal retract.

From the Fra\" iss\' e theorem we have the following universality and rational one-point extension property of $(\Rat \Ur,\Rat R,\Rat P)$:
\begin{itemize}
\item for every $(A,r,p)\in \Age_2$ we have an embedding\\ $\iota: A\hookrightarrow (\Rat \Ur,\Rat R,\Rat P)$, i.e. for every $a\in A$ we have $\Rat R\circ \iota(a)=\iota \circ r(a)$ and $\Rat P\circ \iota(a)=p(a)$,
\item for every $A\in \Age_2$, every embedding $\iota:A\hookrightarrow (\Rat \Ur,\Rat R,\Rat P)$, and every one-point extension $A\subseteq B\in \Age_2$ there exists an extension $\iota\subseteq \tilde{\iota}:B\hookrightarrow (\Rat \Ur,\Rat R,\Rat P)$

\end{itemize}
We take the completion to obtain the Urysohn space together with the retraction $R$ from $\Ur$ onto $F_\Ur=\overline{F_{\Rat \Ur}}$, which is the unique extension of $\Rat R$, and with the distance function $P$, which is the unique extension of $\Rat P$. The following lemma will finish the proof of the theorem.
\begin{lem}\label{thmlem2}
$(\Ur, R, P)$ has a one-point extension property, i.e. for every finite subset $A\subseteq \Ur$ and for every abstract one-point extension $B=A\coprod \{b\}$, there is a corresponding `concrete' one-point extension in $(\Ur, R, P)$.
\end{lem}
Let us formulate the almost-one-point extension property in this situation. $\overline{\Age_2}$ denotes the real finite substructures of the same type as those in $\Age_2$. That is an equivalent definition to $\overline{\Age_1}$ in the proof of the previous theorem.

We say that a class $K\subseteq \overline{\Age_2}$ has an almost-one-point extension property if for every $A\in K$, every extension $A\subseteq B=A\coprod \{b\}\in \overline{\Age_2}$ and every $\varepsilon>0$ there exists a (one or two point) extension $B'=A\coprod \{b',R(b')\}\in K$ such that for every $a\in A$ we have $|d(a,b)-d(a,b')|<\varepsilon$, $|p(b)-p(b')|<\varepsilon$, and if $R(b)\neq b$, i.e. $R(b)\in A$, then we have $d(R(b'),R(b))<\varepsilon$. Analogous definition is used for substructures of $(\Ur, R, F_\Ur)$ (as in Definition \ref{thm1def}).

It suffices to prove the following series of claims as in the proof of Theorem \ref{thm1}.
\begin{claim}\label{thm2claim1}
$\Age_2$ has the almost-one-point extension property.
\end{claim}
\begin{claim}\label{thm2claim2}
If a (not necessarily complete) substructure $(U,R\upharpoonright U,P\upharpoonright U)\subseteq (\Ur, R, P)$ has an almost-one-point extension property, then so does its completion.
\end{claim}
\begin{claim}\label{thm2claim3}
If a complete substructure $(U,R\upharpoonright U, P\upharpoonright U)\subseteq (\Ur, R, P)$ has an almost-one-point extension property, then it has the one-point extension property.
\end{claim}
Claim \ref{thm2claim1}, resp. Claim \ref{thm2claim3}, is proved as Claim \ref{thm1claim1}, resp. Claim \ref{thm1claim3}. We only prove Claim \ref{thm2claim2}. Let $(U,R\upharpoonright U,P\upharpoonright U)\subseteq (\Ur, R, P)$ be some not-complete substructure having an almost-one-point extension property and let $(\bar{U},R\upharpoonright \bar{U},P\upharpoonright \bar{U})$ be its completion. We note here that since $U$ is a substructure, it is closed under $R$, i.e. for every $u\in U$ we have $R(u)\in U$. We show that $(U,R\upharpoonright U,P\upharpoonright U)$ has the almost-one-point extension property. Let $A$ be some finite substructure of $(\bar{U},R\upharpoonright \bar{U},P\upharpoonright \bar{U})$ and let $A\subseteq B=A\coprod\{b\}\in \overline{\Age_2}$ be its one-point extension. Let $\varepsilon>0$ be given. Since $U$ is dense in $\bar{U}$ we can find for each $a\in A$ an element $u(a)\in U$ such that $d(u(a),a)<\varepsilon/2$ and so that for $a_1\neq a_2$, $u(a_1)\neq u(a_2)$. We set $A'=\{u(a):
a\in A\}\cup \{R(u(a)):a\in A\}$, i.e. closing $\{u(a):a\in A\}$ under $R$. Note that it is possible that $|A'|>|A|$ since there might be $a\neq b\in A$ such that $R(a)=R(b)$, however $R(u(a))\neq R(u(b))$. Nevertheless, since $R$ is $1$-Lipschitz we have that $d(R(a),R(u(a)))<\varepsilon/2$ for every $a\in A$. Now take the (greatest) metric amalgamation of $A'\cup A$ with $B$ over $A$. That gives some metric on $A'\cup\{b\}$. If $R(b)=b$ in $B$ then we define $B'=A'\cup\{b\}$ to be the extension of $A'$ where also $R(b)=b$; that gives some structure from $\overline{\Age_2}$. Otherwise, $R(b)=a$ for some $a\in A\subseteq B$ in $B$. Then we define $B'=A'\cup\{b\}$ to be the extension of $A'$ where $R(b)=u(a)$; that also gives some structure from $\overline{\Age_2}$.

We may thus find some almost-one-point extension for $\varepsilon/2$ in $U$. It is straightforward to check that it will be an almost-one-point extension for $A$ as well.
\end{proof}
\begin{thm}\label{thm3}
Let $K$ be an arbitrary compact metric space. Then there exists a closed subset $C\subseteq \Ur\times K$ that is universal and ultrahomogeneous.
\end{thm}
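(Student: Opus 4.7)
The plan is to follow the Fra\" iss\' e-theoretic strategy from Theorems \ref{thm1} and \ref{thm2}, handling the fixed compact space $K$ by parameterizing the structures over a countable dense subset. Fix once and for all a countable dense subset $K_0=\{k_n:n\in \Nat\}\subseteq K$. Define the class $\Age_3$ to consist of finite partial structures $(A,F,p)$, where $A$ is a finite rational metric space, $F$ is a finite subset of $K_0$, and $p:A\times F\rightarrow \Rat^+_0$ is a $1$-Lipschitz function with respect to the sum metric on $A\times F$ (using the metric inherited from $K$ on $F$). Here $p$ is intended to become the partial distance function from the desired closed set $C\subseteq \Ur\times K$. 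Embeddings are isometric embeddings on $A$ together with inclusions $F_1\subseteq F_2$ for which $p_2$ extends $p_1$; countability of $\Age_3$ is clear.

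For the amalgamation, given a common substructure $(A,F_A,p_A)$ of $(B,F_B,p_B)$ and $(C,F_C,p_C)$, take the greatest metric amalgam $D$ of $B$ and $C$ over $A$ from Definition \ref{definamal}, set $F_D=F_B\cup F_C$, and extend the functions to $p_D:D\times F_D\rightarrow \Rat^+_0$ via the greatest $1$-Lipschitz extension formula of Theorem \ref{thm1}, with small rational perturbations on the genuinely new pairs as needed to preserve rational values. Applying the Fra\" iss\' e theorem yields a limit $(\Rat\Ur,\Rat P)$ with $\Rat P:\Rat\Ur\times K_0\rightarrow \Rat^+_0$ a $1$-Lipschitz function. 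Taking the completion gives $(\Ur,P)$, where $P:\Ur\times K\rightarrow \Rea^+_0$ is the unique $1$-Lipschitz extension of $\Rat P$ (using density of $\Rat\Ur\times K_0$ in $\Ur\times K$), and one sets $C:=P^{-1}(0)\subseteq \Ur\times K$, which is automatically closed.

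It remains to verify universality and ultrahomogeneity of $(\Ur,C)$ with respect to arbitrary Polish metric spaces equipped with closed subsets of their product with $K$. This will follow once one shows that $(\Ur,P)$ has the one-point extension property against the real version $\overline{\Age_3}$, whose structures are finite real metric spaces $A$ equipped with a $1$-Lipschitz $p:A\times K\rightarrow \Rea^+_0$. I follow the three-claim template of Theorem \ref{thm1}: first, $\Age_3$ has the almost-one-point extension property with respect to $\overline{\Age_3}$ (simultaneous rounding of metric data and $p$-values); second, almost-one-point extension passes to completions; third, for a complete substructure the almost-one-point extension property upgrades to the exact one-point extension. The concluding back-and-forth argument is then identical to the one sketched below Lemma \ref{thm1lem1}.

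The main obstacle is Claim 1, the almost-one-point extension from $\overline{\Age_3}$ back into $\Age_3$: one must approximate the desired $1$-Lipschitz function $p(b,\cdot):K\rightarrow \Rea^+_0$ simultaneously for \emph{all} $k\in K$ using only finitely many rational values on some $F\subseteq K_0$. Compactness of $K$ is decisive here---choosing $F$ fine enough that every $k\in K$ lies within $\varepsilon$ of some $f\in F$ forces any $1$-Lipschitz extension from $F$ to $K$ to agree with the target $p(b,\cdot)$ within $O(\varepsilon)$ everywhere. Once $F$ is fixed, the numerical device used to prove Claim \ref{thm1claim1} (ordering the points of $A$ by $d(\cdot,b)$ and allocating strictly increasing rational approximations with wiggle room $\varepsilon/|A|$) adapts directly, with the extra coordinate $k\in F$ handled in the same fashion, yielding a structure in $\Age_3$ that is within $\varepsilon$ of the prescribed real one-point extension.
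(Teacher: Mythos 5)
Your overall strategy is the paper's: fix a countable dense $D_K\subseteq K$, cut the class down to something countable by recording only finitely much of the function $p$ in the $K$-direction, amalgamate via the greatest metric amalgam, and run the three-claim scheme with compactness of $K$ entering Claim 1 through a finite $\varepsilon$-net. The difference is in the bookkeeping, and that difference opens a genuine gap in the amalgamation step. In your class a structure carries a finite $F\subseteq K_0$ and a rational-valued $p$ defined only on $A\times F$; amalgamating $(B,F_B,p_B)$ and $(C,F_C,p_C)$ over $(A,F_A,p_A)$ then forces you to invent values at genuinely new pairs such as $(b,k)$ with $b\in B\setminus A$ and $k\in F_C\setminus F_B$. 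The admissible values at such a pair form an interval whose endpoints involve the distances $d_K(k,k')$, which are in general irrational, and this interval can degenerate to a single irrational point, so no ``small rational perturbation'' exists. Concretely: take $k,l,k_1\in K_0$ with $d_K(k,k_1)=\sqrt{2}$, $d_K(k_1,l)=2-\sqrt{2}$, $d_K(k,l)=2$; let $F_B=\{k,l\}$, $F_C=\{k,k_1\}$, $F_A=\{k\}$, and let $b\in B\setminus A$ have $p_B(b,k)=2$ and $p_B(b,l)=0$ (this is $1$-Lipschitz since $d_K(k,l)=2$). Then any $1$-Lipschitz extension must satisfy $p(b,k_1)\geq p(b,k)-\sqrt{2}=2-\sqrt{2}$ and $p(b,k_1)\leq p(b,l)+(2-\sqrt{2})=2-\sqrt{2}$, so $p(b,k_1)=2-\sqrt{2}\notin\Rat$ and your class is not closed under amalgamation.

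The fix is exactly the paper's device: make each structure carry a function defined on all of $A\times\Nat$ (equivalently $A\times D_K$) which is \emph{finitely-controlled}, i.e.\ determined by its rational values on $A\times N$ for some finite $N\subseteq\Nat$ via the greatest $1$-Lipschitz extension formula $f(a,n)=\max\{0,\max\{f(a,m)-d_K(q_m,q_n):m\in N\}\}$. Countability still holds because the isomorphism type is determined by the finite rational data, and amalgamation becomes trivial: since $D\times\Nat=(B\times\Nat)\cup(C\times\Nat)$, no new values ever have to be defined, and one only checks that the union of the two functions is still $1$-Lipschitz for the amalgamated metric (the usual $d(b,c)=\min_{a\in A}\bigl(d(b,a)+d(a,c)\bigr)$ computation) and still finitely-controlled (by $N_B\cup N_C$). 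With that replacement the rest of your argument --- the three claims, the $\varepsilon/3$-net from compactness of $K$ in Claim 1, and the concluding back-and-forth --- goes through and coincides with the paper's proof.
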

\begin{remark}
$(\Ur,C)$ is uniquely characterized by the following property: let $A_1,A_2\subseteq \Ur$ be two finite subsets of $\Ur$, and $K_0\subseteq K$ a finite subset of the compact metric space $K$ with the properties that
\begin{itemize}
\item there is an isometry $\iota:A_1\rightarrow A_2$,
\item for any $x\in A_1$ and $k\in K_0$ we have\\ $\dist((x,k),C)=\dist((\iota(x),k),C)$.
\end{itemize}
Then $\iota$ extends to an autoisometry $\bar{\iota}$ of $\Ur$ such that for any $x\in \Ur$ and $k\in K$ we have $\dist((x,k),C)=\dist((\bar{\iota}(x),k),C)$.

As a consequence we get that for any Polish metric space $X$ equipped with a closed subset $E\subseteq X\times K$ there is an isometric embedding $\iota:X\hookrightarrow \Ur$ such that for any $x\in X$ and $k\in K$ we have $\dist((x,k),E)=\dist((\iota(x),k),C)$.
\end{remark}
\begin{proof}
Let $D_K=\{q_n:n\in \Nat\}$ be an enumeration of some countable dense subset of $K$. For some metric space $M$ we want to consider a function $f:M\times\Nat\rightarrow \Rea^+_0$, where $f(x,n)$, for any $x\in M$ and $n\in \Nat$, is to be interpreted as the distance (again in the sum metric) of $(x,q_n)$ from some closed subset of $M\times K$. However, even if we restrict to finite rational metric spaces and demand that $f$ takes only rational values, we still get uncountably many possible distance functions. The remedy is to consider distance functions that are `controlled' by finite sets. Let $A$ be a finite rational metric space. A rational distance function $f:A\times\Nat\rightarrow \Rat^+_0$, i.e. $1$-Lipschitz on $A\times D_K$, is called \emph{finitely-controlled} if there exists a finite subset $N\subseteq \Nat$ such that for any $a\in A$ and $n\in\Nat$ we have $f(a,n)=\max\{0,\max\{f(a,m)-d_K(q_m,q_n):m\in N\}\}$. Clearly, for any rational metric space $A$ there are only countably many finitely-
controlled distance functions.

We shall consider a countable class of finite rational metric structures $\Age_3$ such that $A$ belongs to $\Age_3$ if
\begin{itemize}
\item $A$ is a finite rational metric space,
\item $A$ is equipped with a rational $1$-Lipschitz finitely-controlled distance function $f:A\times\Nat\rightarrow \Rat^+_0$.

\end{itemize}
$\Age_3$ is countable. Joint embedding for two structures $A,B\in \Age_3$ can be achieved by putting $A$ and $B$ far apart from each other. Amalgamation of two structures $B,C\in \Age_3$ over a third structure $A\in \Age_3$ can be again achieved by taking the greatest metric amalgam of $B$ and $C$ over $A$. $f$ is correctly defined on the amalgam since it is $1$-Lipschitz; that follows from the same argument as in Theorems \ref{thm1} and \ref{thm2}.

Thus we get a Fra\" iss\' e limit $(\Rat\Ur, \Rat F)$, where $\Rat F:\Rat\Ur\times\Nat\rightarrow \Rat^+_0$ is the limit of the distance function. By $\Rat C\subseteq \Rat\Ur\times K$ we shall denote the set $\{(u,q_n):\Rat F(u,n)=0\}$. We have the following universality and rational one-point extension property of $(\Rat\Ur, \Rat F)$:
\begin{itemize}
\item for every $A\in \Age_3$ there exists an embedding $\iota:A\hookrightarrow (\Rat\Ur,\Rat F)$, i.e. for every $a\in A$ and $n\in \Nat$ we have $f(a,n)=\Rat F(\iota(a),n)$,
\item for every $A\in \Age_3$, every embedding $\iota:A\hookrightarrow (\Rat\Ur,\Rat F)$, and every one-point extension $A\subseteq B\in \Age_3$ there exists an extension $\iota\subseteq \tilde{\iota}:B\hookrightarrow (\Rat\Ur,\Rat F)$.

\end{itemize}
We take the completion of $(\Rat \Ur,\Rat F)$ to get $(\Ur, F)$, where $F$ is the unique extension of $\Rat F$ onto $\Ur\times \Nat$. Also, by $C\subseteq \Ur\times K$ we denote the closure of $\Rat C$ in $\Ur\times K$.
By $\overline{\Age_3}$ we shall denote the class of finite metric spaces $M$ with $1$-Lipschitz distance function $f:M\times\Nat\rightarrow \Rea^+_0$ that not only can take values in reals, it does not have to be finitely-controlled.

We say that a subclass $K\subseteq \overline{\Age_3}$ has an almost-one-point extension property if for every $A\in K$, every $\varepsilon>0$, and every one-point extension $A\subseteq B=A\coprod\{b\}\in \overline{\Age_3}$ there exists a one-point extension $B'=A\coprod\{b'\}\in K$ such that for every $a\in A$ and every $n\in \Nat$ we have $|d(a,b)-d(a,b')|<\varepsilon$ and $|f(b,n)-f(b',n)|<\varepsilon$. Analogously, we define an almost-one-point extension property for a substructure $(U, F)\subseteq (\Ur, F)$.

As before we need to prove:
\begin{lem}
$(\Ur, F)$ has the one-point extension property.
\end{lem}
That will be again achieved by proving the following three claims.
\begin{claim}\label{thm3claim1}
$\Age_3$ has an almost-one-point extension property.
\end{claim}
\begin{claim}\label{thm3claim2}
If a (not necessarily complete) substructure $(U,F\upharpoonright U\times \Nat)\subseteq (\Ur,F)$ has an almost-one-point extension property, then so does its completion.
\end{claim}
\begin{claim}\label{thm3claim3}
If a complete substructure $(U,F\upharpoonright U\times\Nat)\subseteq (\Ur,F)$ has an almost-one-point extension property, then it has the one-point extension property.
\end{claim}
Claims \ref{thm3claim2} and \ref{thm3claim3} are proved similarly as the analogous Claims \ref{thm1claim2} and \ref{thm1claim3}. We only prove Claim \ref{thm3claim1}.

Fix $A\in \Age_3$, $\varepsilon>0$ and a one-point extension $B=A\coprod\{b\}\in \overline{\Age_3}$. We define the rational metric on $B'$ precisely as we did in the proof of the analogous Claim \ref{thm1claim1}. It remains to define the finitely controlled rational distance function $f$ on $b'$. Note again that we have that for every $a\in A$, $d(a,b)<d(a,b')$, and let $\delta=\min\{\min\{d(a,b')-d(a,b):a\in A\},\varepsilon/3\}$. Since $K$ is compact there exists a finite set $N\subseteq \Nat$ such that $\{q_n:n\in N\}$ is a $\varepsilon/3$-net in $K$. We define a finitely-controlled $f$ controlled by values on $N$ (we may without loss of generality assume that $f$ on $A\times\Nat$ was controlled by values on $N$). For each $n\in N$ we set $f(b',n)$ to be an arbitrary rational number from the interval $[f(b,n),f(b,n)+\delta]$. For other $m\in \Nat$ it extends uniquely by the formula $f(b',m)=\max\{0,\max\{f(b',n)-d_K(q_n,q_m):n\in N\}\}$. We check that $f$ is still $1$-Lipschitz, i.e. for any $a\in A$ and 
$n,
m$ we have $|f(a,n)-f(b',m)|\leq d(a,b')+d_K(q_n,q_m)$. Since $f$ is controlled by values on $N$ it suffices to consider $n,m$ from $N$. We have $$|f(a,n)-f(b',m)|\leq |f(a,n)-f(b,m)|+\delta\leq$$ $$d(a,b)+d_K(q_n,q_m)+\delta\leq d(a,b')+d_K(q_n,q_m).$$
Finally, we need to check that for any $n\in \Nat$ we have $|f(b,n)-f(b',n)|\leq \varepsilon$. It is clear for $n\in N$, so let $n\in \Nat$ be arbitrary. However, since $\{q_n:n\in N\}$ forms an $\varepsilon/3$-net in $K$ there exists $m\in N$ such that $d_K(q_n,q_m)<\varepsilon/3$. Then we have $$|f(b,n)-f(b',n)|\leq |f(b,n)-f(b,m)|+|f(b,m)-f(b',m)|+$$ $$|f(b',m)-f(b',n)|\leq \varepsilon/3+\varepsilon/3+\varepsilon/3$$ since $f$ is $1$-Lipschitz. This finishes the proof.
\end{proof}
\begin{thm}\label{thm4}
Let $Z$ be an arbitrary Polish metric space and $L>0$ an arbitrary real constant. Then there exists an $L$-Lipschitz function $F:\Ur\rightarrow Z$ that is universal and ultrahomogeneous.
\end{thm}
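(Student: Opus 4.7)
The plan is to follow the Fra\"iss\'e-theoretic scheme of Theorems \ref{thm1}, \ref{thm2}, and \ref{thm3} with the new piece of data being an $L$-Lipschitz function to a fixed countable dense subset $D_Z=\{z_n:n\in\Nat\}$ of $Z$. Define a class $\Age_4$ whose members are pairs $(A,g)$ where $A$ is a finite rational metric space and $g:A\to D_Z$ is $L$-Lipschitz; embeddings are isometric embeddings preserving $g$. The class is countable since $A$ is finite and $D_Z$ is countable. For amalgamation of $B,C\in\Age_4$ over a common substructure $A\in\Age_4$, take the greatest metric amalgam $D$ of $B$ and $C$ over $A$ from Definition \ref{definamal} and set $g_D=g_B\cup g_C$; the $L$-Lipschitz condition is preserved because for any $b\in B\setminus A$, $c\in C\setminus A$, and $a\in A$ one has $d_Z(g(b),g(c))\leq L(d_B(b,a)+d_C(a,c))$, and taking the infimum over $a\in A$ gives $d_Z(g(b),g(c))\leq L\cdot d_D(b,c)$. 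Joint embedding is trivial by placing two structures far apart.

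The Fra\"iss\'e limit is a structure $(\Rat\Ur,\Rat F)$ whose underlying metric space is again the rational Urysohn space (by Fact \ref{charUry}) and whose function $\Rat F:\Rat\Ur\to D_Z$ is $L$-Lipschitz. Since $\Rat F$ is $L$-Lipschitz into the complete space $Z$, it extends uniquely to an $L$-Lipschitz map $F:\Ur\to Z$ on the completion. The theorem then reduces to proving a one-point extension lemma for $(\Ur,F)$ against the real class $\overline{\Age_4}$, via three claims following the pattern of the earlier proofs: (i) $\Age_4$ has an almost-one-point extension property; (ii) this property passes to the completion of any substructure of $(\Ur,F)$; (iii) a complete substructure of $(\Ur,F)$ with the almost-one-point extension property has the one-point extension property. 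Claims (ii) and (iii) are proved essentially verbatim as the corresponding claims in Theorem \ref{thm1}.

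For claim (i), given $(A,g)\in\Age_4$, a real one-point extension $A\subseteq B=A\coprod\{b\}\in\overline{\Age_4}$, and $\varepsilon>0$, first construct the rational metric on $B'=A\coprod\{b'\}$ exactly as in the proof of Claim \ref{thm1claim1}, obtaining $d(a,b')>d(a,b)$ for every $a\in A$; set $\delta=\min\{d(a,b')-d(a,b):a\in A\}>0$. Then pick $g(b')\in D_Z$ with $d_Z(g(b),g(b'))<\min\{L\delta,\varepsilon\}$, available by density of $D_Z$ in $Z$. The $L$-Lipschitz condition on $B'$ follows from
$$d_Z(g(a),g(b'))\leq d_Z(g(a),g(b))+d_Z(g(b),g(b'))< L\cdot d(a,b)+L\delta\leq L\cdot d(a,b').$$
The only subtlety, and hence the main (mild) obstacle, is ensuring that the perturbation from $g(b)\in Z$ to $g(b')\in D_Z$ is small enough to keep the Lipschitz inequality valid; scaling the allowed perturbation by the metric gap $\delta$ and the constant $L$ takes care of this. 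Non-compactness of $Z$ causes no trouble here, since only density of $D_Z$ in $Z$ is needed; the finite-control device from Theorem \ref{thm3} is unnecessary because $g$ is merely a function on a finite set.
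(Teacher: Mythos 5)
Your proposal is correct and follows essentially the same route as the paper: the same countable class $\Age_4$ of finite rational metric spaces with an $L$-Lipschitz map into $D_Z$, the same greatest-metric-amalgam argument for amalgamation, the same reduction to a one-point extension lemma via the three standard claims, and the same proof of the almost-one-point extension claim (perturbing $g(b)$ to a point of $D_Z$ within $L\delta$ of it, where $\delta$ is the metric gap created by the rational approximation of the distances). Your observation that only density of $D_Z$, not the finite-control device of Theorem \ref{thm3}, is needed here matches the paper's treatment exactly.
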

\begin{remark}
$(\Ur,F)$ is uniquely characterized by the following property: let $A_1,A_2$ be two subsets of $\Ur$ such that there is an isometry $\iota:A_1\rightarrow A_2$ with the property that for any $x\in A_1$ we have $F(x)=F\circ \iota(x)$. Then $\iota$ extends to an autoisometry of $\Ur$ with the same property.

As a consequence we get that for any Polish metric space $X$ and an $L$-Lipschitz function $f:X\rightarrow Z$ there is an isometric embedding $\iota:X\hookrightarrow \Ur$ such that for any $x\in X$ we have $f(x)=F\circ \iota(x)$.
\end{remark}
\begin{proof}
Let $D_Z=\{z_n:n\in \Nat\}\subseteq Z$ be some countable dense subset of $Z$. We consider the following countable class $\Age_4$ of finite structures. We have that $A\in \Age_4$ if
\begin{itemize}
\item $A$ is a finite rational metric space
\item there is an $L$-Lipschitz function $p:A\rightarrow D_Z$

\end{itemize}
$\Age_4$ is clearly countable. Joint embedding for two structures $A,B\in \Age_4$ can be again obtained by putting $A$ and $B$ `far apart' from each other; i.e. if $m=\max\{L\cdot d_Z(p(a),p(b):a,b\in A\coprod B\}$ and $M$ is some rational greater than $m$ and $\mathrm{diam}(A)$ and $\mathrm{diam}(B)$, then we can take the disjoint union $A\coprod B$ and define the distance between any $a\in A$ and $b\in B$ to be $2M$. This works. We show the amalgamation property. Suppose we have structures $A,B,C\in \Age_4$, where we assume that $A$ is a common substructure of both $B$ and $C$. As usual, we take the greatest metric amalgam of $B$ and $C$ over $A$ and show that it works. We need to show that for $b\in B$ and $c\in C$ we have $d_Z(p(b),p(c))\leq L\cdot d(b,c)$. Let $a\in A$ be such that $d(b,c)=d(b,a)+d(a,c)$. Then we have $$d_Z(p(b),p(c))\leq d_Z(p(b),p(a))+d_Z(p(a),p(c))\leq$$ $$L\cdot (d(b,a)+d(a,c))=L\cdot d(b,c)$$ and we are done.

We now consider the Fra\" iss\' e limit. As before, it is easy to check that it is the rational Urysohn space $\Rat\Ur$ together with an $L$-Lipschitz function $\Rat P:\Rat\Ur \rightarrow D_Z$. The following universal property and rational one-point extension property characterize $(\Rat\Ur,\Rat P)$:
\begin{itemize}
\item for every $A\in \Age_4$ there exists an embedding $\iota:A\hookrightarrow (\Rat \Ur,\Rat P)$, i.e. $\iota$ is isometric and for every $a\in A$ we have $p(a)=\Rat P\circ \iota(a)$,
\item for every $A\in \Age_4$, every embedding $\iota:A\hookrightarrow (\Rat\Ur,\Rat P)$, and every one-point extension $A\subseteq B\in \Age_4$ there exists an extension $\iota\subseteq \tilde{\iota}:B\hookrightarrow (\Rat\Ur,\Rat P)$.

\end{itemize}
We take the completion $(\Ur,P)$, where $P:\Ur\rightarrow Z$ is the unique extension of $\Rat P$ from $\Rat \Ur$ onto $\Ur$. To finish the proof, we must show the following.
\begin{lem}\label{thmlem4}
$(\Ur,P)$ has a one-point extension property, i.e. for every finite subset $A\subseteq \Ur$ and for every abstract one-point extension $(B=A\coprod \{b\}, \bar{p}:B\rightarrow Z)$, where $\bar{p}=P\upharpoonright A\cup \{(b,z)$ for $z\in Z$, there is a corresponding `concrete' one-point extension in $(\Ur,P)$.
\end{lem}
By $\overline{\Age_4}$ we mean the class of all finite metric spaces equipped with an $L$-Lipschitz function $p$ with values in $Z$, not necessarily in the countable dense set $D_Z$.

We say that a class $K\subseteq \overline{\Age_4}$ has an almost-one-point extension property if for every $A\in K$, every extension $A\subseteq B=A\coprod \{b\}\in \overline{\Age_4}$ and every $\varepsilon>0$, there exists a one-point extension $B'=A\coprod \{b'\}\in K$ such that for every $a\in A$ we have $|d(a,b)-d(a,b')|<\varepsilon$ and $d_Z(p(b),p(b'))<L\cdot \varepsilon$. The almost-one-point extension property for a substructure $(U,P\upharpoonright U)\subseteq (\Ur,P)$ is defined analogously as in the proofs of Theorems \ref{thm1}, \ref{thm2} and \ref{thm3}. Again note that an almost-one-point extension property for $\Age_4$ is equivalent with an almost-one-point extension property for $(\Rat\Ur,\Rat P)$.

As before, we prove Lemma \ref{thmlem4} through the following series of claims.
\begin{claim}\label{thm4claim1}
$\Age_4$ has the almost-one-point extension property.
\end{claim}
\begin{claim}\label{thm4claim2}
If a (not necessarily complete) substructure $(U,P\upharpoonright U)\subseteq (\Ur,P)$ has an almost-one-point extension property, then so does its completion.
\end{claim}
\begin{claim}\label{thm4claim3}
If a complete substructure $(U,P\upharpoonright U)\subseteq (\Ur,P)$ has an almost-one-point extension property, then it has the one-point extension property.
\end{claim}
We shall only prove Claim \ref{thm4claim1}, the proofs of the other two claims are routine and modifications of the analogous ones from the proofs of Theorems \ref{thm1}, \ref{thm2} and \ref{thm3}.

Let $A\in \Age_4$ and $A\subseteq B\in \overline{\Age_4}$ with some $\varepsilon>0$ be given, where $B=A\coprod \{b\}$. We define $A\subseteq B'\in \Age_4$ as follows. $B'=A\coprod \{b'\}$ and for every $a\in A$ we define $d(a,b')$ as in Claim \ref{thm1claim1}. In particular, we again have that 
\begin{equation}\label{thm4eq1}
d(a,b')> d(a,b)
\end{equation}
for every $a\in A$. If $p(b)\in D_Z$, then we set $p(b')=p(b)$ and because of \eqref{thm4eq1} we have $d_Z(p(a)-p(b'))=d_Z(p(a),p(b))\leq L\cdot d(a,b)\leq d(a,b')$ for every $a\in A$ and we are done. If $p(b)\notin D_Z$, in particular $p(b)$ is not an isolated point of $Z$, we choose some $z_0\in D_Z$ such that $d_Z(z_0,p(b))<L\cdot \delta$, where $\delta=\min \{d(a,b')-d(a,b):a\in A\}>0$. Then we put $p(b')=z_0$ and we claim that this works. Indeed, for any $a\in A$ we have $$d_Z(p(a),z_0)\leq d_Z(p(a),p(b))+L\cdot \delta\leq L\cdot (d(a,b)+\delta)\leq L\cdot d(a,b').$$
This finishes the proof of Theorem \ref{thm4}.
\end{proof}
We have an interesting corollary of Theorem \ref{thm4}. This theorem `lifts' from the category of complete metric spaces to the category of Banach spaces via the functor assigning to a metric space its Lipschitz-free Banach space and to a Lipschitz map its linear extension. We refer the reader to the book \cite{Wea} for information about Lipschitz-free Banach spaces and to the paper \cite{GoKa} of Godefroy and Kalton. Here we just recall that for every metric space $X$ (with a distinguished point representing $0$) there exists a Banach space $F(X)$ in which there is an isometric copy of $X$ such that $\Span\{X\}$ is dense in $F(X)$ and that is uniquely characterized by the property that for every Banach space $Y$ and Lipschitz map $f:X\rightarrow Y$ sending $0$ to $0$ there exists a unique bounded linear operator with the same Lipschitz constant $\hat{f}:F(X)\rightarrow Y$ that extends $f$.

In case $X$ is a Banach space and $\iota:X\rightarrow F(X)$ is the canonical isometric embedding, then by $\beta:F(X)\rightarrow X$ we denote $\hat{\iota^{-1}}$, i.e. the unique linear operator from $F(X)$ to $X$ such that we have $\beta\circ\iota=\mathrm{id}$.

The following theorem was proved in \cite{GoKa}.
\begin{thm}[Godefroy,Kalton \cite{GoKa}]\label{Kalton}
For any separable Banach space $X$ there exists a linear isometry $\iota_{GK}:X\rightarrow F(X)$ such that $\beta\circ \iota_{GK}=\mathrm{id}_X$.
\end{thm}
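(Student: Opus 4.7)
The plan is to dualize the statement and then use Gateaux differentiability of Lipschitz functions on separable Banach spaces to construct the required linear section. Constructing a linear isometry $\iota_{GK}:X\to F(X)$ with $\beta\circ\iota_{GK}=\mathrm{id}_X$ is equivalent to constructing a norm-one linear projection
\[
P:\mathrm{Lip}_0(X)\longrightarrow X^*,
\]
where $X^*\subset\mathrm{Lip}_0(X)$ is the subspace of bounded linear functionals. Indeed, $F(X)^*\cong\mathrm{Lip}_0(X)$ identifies $\beta^*$ with the canonical inclusion $X^*\hookrightarrow\mathrm{Lip}_0(X)$; the condition $\beta\circ\iota_{GK}=\mathrm{id}_X$ becomes $\iota_{GK}^*\circ\beta^*=\mathrm{id}_{X^*}$, so $\iota_{GK}^*$ is a norm-one quotient onto $X^*$ whose restriction to $X^*$ is the identity, i.e.\ a norm-one projection. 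Conversely any such projection dualizes to an isometric $\iota_{GK}$ of the required type.

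To build $P$, let $\mu$ be a non-degenerate Gaussian cylinder measure on the separable Banach space $X$. By the Christensen--Mankiewicz extension of Mazur's theorem, every $f\in\mathrm{Lip}_0(X)$ is Gateaux differentiable $\mu$-almost everywhere, with derivative $df(x)\in X^*$ satisfying $\|df(x)\|_{X^*}\le\mathrm{Lip}(f)$. I would then set
\[
(Pf)(y)\ :=\ \int_X df(x)(y)\,d\mu(x)\qquad(y\in X).
\]
Linearity in $y$ is inherited from each $df(x)$, and $|(Pf)(y)|\le\mathrm{Lip}(f)\,\|y\|$ gives $\|Pf\|_{X^*}\le\mathrm{Lip}(f)$; linearity and norm-one boundedness of $f\mapsto Pf$ follow. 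If $f=\phi\in X^*$, then $df(x)\equiv\phi$, so $(P\phi)(y)=\phi(y)$ and $P|_{X^*}=\mathrm{id}_{X^*}$. Hence $P$ is a norm-one projection onto $X^*$ and, dualizing back, we obtain the desired $\iota_{GK}$.

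The main obstacle in this plan is the analytic input of the second step: one needs a well-behaved notion of ``almost everywhere'' on the infinite-dimensional space $X$, for which separability of $X$ is essential (it guarantees the existence of non-degenerate Gaussian cylinder measures and the validity of the Mazur--Christensen--Mankiewicz differentiability theorem), and one must check Bochner measurability of $x\mapsto df(x)(y)$ so that the integral defining $Pf$ produces a genuine element of $X^*$ rather than only of $X^{**}$. Granting those analytic facts, the rest of the construction is purely formal.
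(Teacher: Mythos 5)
First, a remark on the comparison you asked for: the paper does not prove this statement at all --- it is quoted from Godefroy and Kalton \cite{GoKa} --- so there is no in-paper argument to measure your proposal against. Your differentiation-plus-averaging strategy is in fact close in spirit to the argument in \cite{GoKa}, but as written it has a genuine gap.

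The gap is in your opening reduction. The forward direction is fine: if $\iota_{GK}$ exists, then $\iota_{GK}^*$ is a norm-one projection of $\mathrm{Lip}_0(X)=F(X)^*$ onto $X^*=\beta^*(X^*)$. The converse --- ``any such projection dualizes to an isometric $\iota_{GK}$ of the required type'' --- is false as stated: an operator $P:\mathrm{Lip}_0(X)\rightarrow X^*$ between dual spaces is of the form $\iota^*$ for some $\iota:X\rightarrow F(X)$ only if $P$ is weak$^*$-to-weak$^*$ continuous, equivalently only if for each $y\in X$ the functional $f\mapsto (Pf)(y)$ lies in the predual $F(X)$ rather than merely in $\mathrm{Lip}_0(X)^*=F(X)^{**}$. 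For your $P$ this is exactly the assertion that $f\mapsto\int_X df(x)(y)\,d\mu(x)$ is weak$^*$-continuous on bounded subsets of $\mathrm{Lip}_0(X)$ (Krein--Smulian), i.e.\ that $\int df_n(x)(y)\,d\mu(x)\rightarrow\int df(x)(y)\,d\mu(x)$ whenever $f_n\rightarrow f$ pointwise with uniformly bounded Lipschitz constants. Derivatives of pointwise-convergent Lipschitz functions do not converge pointwise (consider $f_n(t)=n^{-1}\sin(nt)$ on $\Rea$), so this needs a real argument; in the Gaussian setting it is obtained by integration by parts along Cameron--Martin directions followed by a norm-density argument in $y$, none of which appears in your sketch. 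This is the technical heart of the theorem. By contrast, the obstacle you single out --- measurability so that $Pf$ lands in $X^*$ rather than $X^{**}$ --- is not an obstacle: $(Pf)(y)$ is defined pointwise, is linear and bounded in $y$, hence $Pf\in X^*$ automatically, and $x\mapsto df(x)(y)$ is measurable as a pointwise limit of continuous difference quotients. In short: right general strategy, but the reduction to a norm-one projection discards precisely the weak$^*$-continuity that makes the theorem nontrivial, and the step restoring it is missing.
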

It follows that the so-called Holmes space $\Hol$, the Lipschitz-free space $F(\Ur)$ over the Urysohn space, is a universal separable Banach space. We refer the reader to the paper of Holmes \cite{Holm} and to Chapter 5 of \cite{Pe} for more information about this Banach space. We remark that it was proved by Fonf and Wojtaszczyk in \cite{Woj} that the Holmes universal space is not linearly isometric to other known universal Banach spaces such as the Gurarij space or $C([0,1])$. It is also not isomorphic to the Pe\l czy\' nski universal space which also follows from the results from \cite{Woj}.

We note that universal and homogeneous linear operators on the Gurarij space and $p$-Gurarij spaces were constructed in \cite{GarKub} and \cite{CGK}.
\begin{thm}\label{thm4cor}
Let $Z$ be an arbitrary separable Banach space and $L>0$ an arbitrary real constant. Then there exists a universal linear operator $\Phi:\Hol\rightarrow Z$ of norm $L$.
\end{thm}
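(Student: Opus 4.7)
The plan is to take $\Phi$ to be the canonical linear extension $\hat{F}\colon \Hol = F(\Ur)\to Z$ of the universal $L$-Lipschitz map $F\colon \Ur\to Z$ provided by Theorem~\ref{thm4}. First, applying the universality of $F$ to the singleton Polish space with the zero map, I would choose $*_0\in\Ur$ with $F(*_0)=0$ and use it as the basepoint of $\Ur$; then $\Phi=\hat{F}$ is well defined, linear, and satisfies $\|\Phi\|=\mathrm{Lip}(F)=L$.

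To establish universality, I would take an arbitrary separable Banach space $X$ and a bounded linear operator $T\colon X\to Z$ with $\|T\|\leq L$, and produce a linear isometric embedding $\iota\colon X\hookrightarrow\Hol$ such that $\Phi\circ\iota=T$. Since $T$ is $L$-Lipschitz and $T(0)=0$, the universality of $F$ yields an isometric embedding $\tilde{\jmath}\colon X\to\Ur$ with $F\circ\tilde{\jmath}=T$. Now $F(\tilde{\jmath}(0))=0=F(*_0)$, so by the ultrahomogeneity of $F$ there is an automorphism $\sigma$ of $\Ur$ commuting with $F$ such that $\sigma(\tilde{\jmath}(0))=*_0$. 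Set $j:=\sigma\circ\tilde{\jmath}$; this is a basepoint-preserving isometric embedding with $F\circ j=T$.

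Applying the Lipschitz-free functor to $j$ gives a linear isometric embedding $\hat{\jmath}\colon F(X)\to F(\Ur)=\Hol$. Precomposing with the Godefroy--Kalton section $\iota_{GK}\colon X\to F(X)$ from Theorem~\ref{Kalton} yields the desired linear isometric embedding $\iota:=\hat{\jmath}\circ\iota_{GK}\colon X\to\Hol$. It remains to check $\Phi\circ\iota=T$: functoriality of the Lipschitz-free construction gives $\hat{F}\circ\hat{\jmath}=\widehat{F\circ j}=\hat{T}$, and since $T\circ\beta\colon F(X)\to Z$ is a linear map that agrees with $T$ on the canonical copy of $X$ inside $F(X)$, the uniqueness in the defining property of $\hat{T}$ forces $\hat{T}=T\circ\beta$. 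Combining this with the identity $\beta\circ\iota_{GK}=\mathrm{id}_X$ from Theorem~\ref{Kalton} gives
\[
\Phi\circ\iota=\hat{T}\circ\iota_{GK}=T\circ\beta\circ\iota_{GK}=T,
\]
as required.

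The main conceptual hurdle is that the canonical isometric embedding $X\hookrightarrow F(X)$ is generally not linear, so the Lipschitz universality of $F$ does not transfer to linear universality of $\Phi$ by a direct application of the functor alone; the Godefroy--Kalton linear section $\iota_{GK}$ together with the identity $\beta\circ\iota_{GK}=\mathrm{id}_X$ is precisely what bridges this gap, by letting us extract a linear copy of $X$ inside $F(X)$ that is compatible with linear maps out of $F(X)$. Once this point is absorbed, the remainder of the argument is a formal consequence of the universality and ultrahomogeneity properties of $F$ and the functoriality of the Lipschitz-free construction.
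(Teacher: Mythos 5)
Your proposal is correct and follows essentially the same route as the paper: define $\Phi$ as the linear extension of the universal $L$-Lipschitz map $F$, realize a given operator $T$ via a (nonlinear) isometric copy of $X$ in $\Ur$ with $F\circ j=T$, and then use the Godefroy--Kalton section $\iota_{GK}$ together with $\beta\circ\iota_{GK}=\mathrm{id}_X$ to extract a linear isometric copy on which $\Phi$ restricts to $T$. Your explicit handling of the basepoint via ultrahomogeneity is a small refinement of a point the paper passes over silently, but the argument is otherwise the same.
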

\begin{proof}
Let $F:\Ur\rightarrow Z$ be the universal $L$-Lipschitz map from the Urysohn space to the Banach space $Z$. Denote by $\Phi$ the unique linear extension of $F$ from $\Ur$ to $\Hol$, where we chose $0$ in $\Ur$ so that $F(0)=0$. We claim that $\Phi$ is as desired.

Indeed, let $X$ be an arbitrary separable Banach space equipped with a linear operator $\psi:X\rightarrow Z$ such that $\|\psi\|\leq L$. Using Theorem \ref{thm4} we obtain a (non-linear) isometric embedding $\iota:X\hookrightarrow \Ur$ such that for every $x\in X$ we have 
\begin{equation}\label{eq1}
F\circ \iota(x)=\psi(x).
\end{equation}

We shall again denote by $X'$ the image of $X$ in $\Ur$ and by $\beta:F(X')\subseteq \Hol\rightarrow X$ the canonical surjective linear operator from $F(X')$ onto $X$ so that we have $\beta\circ\iota=\mathrm{id}_X$. By Theorem \ref{Kalton} there exists a linear isometry $\iota_{GK}:X\rightarrow F(X')\subseteq \Hol$ such that $\beta\circ\iota_{GK}=\mathrm{id}_X$. We claim that for every $x\in X$ we have $\Phi\circ \iota_{GK}=\psi$. Once we prove it we are done.

Consider the linear operator $\Phi '=\psi\circ \beta$. We have $$\Phi '\circ \iota_{GK}=\psi\circ \beta\circ \iota_{GK}=\psi\circ\mathrm{id}_X=\psi$$ thus it suffices to prove that $\Phi\upharpoonright F(X') =\Phi '$. However, $\Phi\upharpoonright F(X')$ is uniquely determined by the property that for every $x\in X'$ we have $\Phi(x)=F(x)$. But if we take any $x\in X'$ then $$\Phi '(x)=\psi\circ\beta(x)=\psi\circ\iota^{-1}(x)=F(x)$$ where the last equality follows from \eqref{eq1} and we are done.
\end{proof}
\section{Gurarij space}
Our aim in this section is to prove similar universality and homogeneity results that we did for the Urysohn space for the Gurarij space. We recall the definition of the Gurarij space.
\begin{defin}\label{defGur}
Recall that a separable Banach space $\Gur$ is Gurarij if it satisfies the following property: for every $\varepsilon>0$, every finite dimensional Banach spaces $E\subseteq F$, and every linear isometry $\phi:E\hookrightarrow \Gur$ there exists an extension $\bar{\phi}\supseteq \phi:F\hookrightarrow \Gur$ such that $\bar{\phi}$ is an `$\varepsilon$-isometry', i.e. for every $x\in F$ we have $(1-\varepsilon)\cdot \|x\|\leq \|\bar{\phi}(x)\|\leq (1+\varepsilon)\cdot \|x\|$.
\end{defin}

Before formulating the theorems, let us define a necessary notion. Let $(S_E,E)$ and $(S_F,F)$ be pairs of $1$-Lipschitz seminorms together with Banach spaces. Let $\varepsilon>0$. Then we say that $\phi:(S_E,E)\rightarrow (S_F,F)$ is an \emph{$\varepsilon$-morphism} if $\phi$ is a linear $\varepsilon$-isometry between $E$ and $F$ in their norms and also an $\varepsilon$-isometry between the quotients of $E$ and $F$ by their respective seminorms, i.e. for every $x\in E$ we have $|S_E(x)-S_F(x)|<\varepsilon\cdot S_E(x)$.

Since we shall speak always about seminorms that are $1$-Lipschitz, we may sometimes omit the adjective `$1$-Lipschitz' when talking about them.

The following simple fact that we state without a proof shows why we consider $1$-Lipschitz seminorms.
\begin{fact}
Let $X$ be a Banach space. There is a (not always one-to-one) correspondence between closed subspaces $Y\subseteq X$ and $1$-Lipschitz seminorms $S:X\rightarrow \Rea^+_0$.

Namely, the function $\dist(\cdot,Y):X\rightarrow \Rea^+_0$ is a $1$-Lipschitz seminorm and the set $\{x\in X:S(x)=0\}$ is a closed subspace.
\end{fact}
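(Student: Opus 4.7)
The plan is to verify that both assignments described in the statement are well defined, and then to explain why the resulting correspondence fails to be injective (the parenthetical caveat in the statement).

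For the first direction, I would show that $\dist(\cdot,Y)$ is a $1$-Lipschitz seminorm whenever $Y\subseteq X$ is a closed subspace. Positive homogeneity relies on the fact that $\lambda Y=Y$ for every nonzero scalar $\lambda$, giving $\dist(\lambda x,Y)=\inf_{y\in Y}\|\lambda x-y\|=|\lambda|\inf_{y'\in Y}\|x-y'\|=|\lambda|\dist(x,Y)$; the case $\lambda=0$ is trivial since $0\in Y$. Subadditivity follows by choosing $y,y'\in Y$ and estimating $\dist(x+x',Y)\leq\|(x+x')-(y+y')\|\leq\|x-y\|+\|x'-y'\|$, then taking infima independently. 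The $1$-Lipschitz bound $|\dist(x,Y)-\dist(x',Y)|\leq\|x-x'\|$ is the reverse triangle inequality applied inside the infimum.

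For the reverse direction, I would start from a $1$-Lipschitz seminorm $S:X\rightarrow\Rea^+_0$ and show that $Y:=\{x\in X:S(x)=0\}$ is a closed subspace. Continuity of $S$ (immediate from being Lipschitz) makes $Y$ closed as the preimage of the singleton $\{0\}$. Closure of $Y$ under scalar multiplication is immediate from $S(\lambda x)=|\lambda|S(x)$, and closure under addition follows from $0\leq S(x+x')\leq S(x)+S(x')=0$ whenever $x,x'\in Y$.

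Finally, I would explain why the correspondence is not one-to-one: given any $1$-Lipschitz seminorm $S$ and any constant $0<c\leq 1$, the rescaling $cS$ is again a $1$-Lipschitz seminorm with exactly the same zero set, so distinct seminorms can produce the same closed subspace. Nevertheless, the map $Y\mapsto\dist(\cdot,Y)$ provides a canonical right inverse to $S\mapsto\{S=0\}$, since $\{x:\dist(x,Y)=0\}=\overline{Y}=Y$ using that $Y$ is closed. There is no significant obstacle in this argument; the entire verification amounts to unwinding the definitions of a seminorm and of the distance function to a subspace, and the only conceptual point worth isolating is that the failure of injectivity comes precisely from the freedom to rescale below the Lipschitz bound.
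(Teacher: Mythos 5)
Your verification is correct: the paper states this Fact explicitly without proof (it is introduced as "the following simple fact that we state without a proof"), and your argument is precisely the routine unwinding of definitions that the author intends the reader to supply. The one small remark worth making is that your rescaling example $cS$ witnesses non-injectivity only when $S$ is not identically zero, i.e.\ when the subspace $Y$ is proper; but since the statement only claims the correspondence is "not always one-to-one," this is exactly the right level of generality.
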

We shall use the following notation: whenever $X$ is a Banach space and $Y\subseteq X$ is a closed subspace, then by $S_X^Y:X\rightarrow \Rea^+_0$ we denote the seminorm $\dist(\cdot,Y)$. Conversely, if $S:X\rightarrow \Rea^+_0$ is a $1$-Lipschitz seminorm, then by $Y_S\subseteq X$ we denote the closed subspace $\{x\in X: S(x)=0\}$.

The following theorems are the main results. The first one is analogous to Theorem \ref{thm1}, we only replace $1$-Lipschitz function by $1$-Lipschitz seminorms. The second one is analogous to Theorem \ref{thm2}, we only replace $1$-Lipschitz retraction by $1$-Lipschitz projection. As in the previous section, we formulate the theorems rather informally and provide precise formulations in the remarks below.
\begin{thm}\label{Guthm1}
There exists a universal and almost-homogeneous closed subspace of the Gurarij space, resp. a $1$-Lipschitz seminorm on the Gurarij space.
\end{thm}
\begin{remark}
Precisely, we claim that there exists a $1$-Lipschitz seminorm $\mathbb{S}:\Gur\rightarrow \Rea^+_0$ on the Gurarij space such that the pair $(\mathbb{S},\Gur)$ has the following homogeneity property: for every finite dimensional Banach space $E$ with a seminorm $S_E: E\rightarrow \Rea^+_0$ and for every finite dimensional extension $F$ equipped with a seminorm $S_F:F\rightarrow \Rea^+_0$ that extends $S_E$, for every $\varepsilon>0$ and for every $0$-morphism $\phi:(S_E,E)\hookrightarrow (\mathbb{S},\Gur)$, there exists an extension $\bar{\phi}\supseteq \phi:(S_F,F)\hookrightarrow (\mathbb{S},\Gur)$ that is an $\varepsilon$-morphism.

Consequently, for every separable Banach space $X$ with a closed subspace $Y$ there exists a linear isometry $\phi:X\hookrightarrow \Gur$ such that for every $x\in X$ we have $\dist(x,Y)=\dist(\phi(x),\mathbb{H})$, where $\mathbb{H}=\{x\in \Gur:\mathbb{S}(x)=0\}$ may be understood as a \emph{universal closed subspace} of the Gurarij space.
\end{remark}
If $X$ is a Banach space, $P:X\rightarrow Y$ is a norm one projection onto its closed subspace and $S:X\rightarrow \Rea^+_0$ is a seminorm, then we say that $P$ and $S$ are \emph{compatible} if $S\leq S_X^Y$ (i.e. for every $x\in X$, $S(x)\leq S_X^Y(x)$), and for every $x\in X$, $S(x)=0$ iff $S_X^Y(x)=0$, i.e. $\{x\in X:S(x)=0\}=Y$.
\begin{thm}\label{Guthm2}
There exists a universal and almost-homogeneous norm one projection $P_E:\Gur\rightarrow \mathbb{H}$ onto a $1$-complemented subspace.
\end{thm}
\begin{remark}
Precisely, we claim that there exists a norm one projection $P:\Gur\rightarrow \mathbb{H}$ onto a $1$-complement subspace of the Gurarij space with the following homogeneity property: for every finite dimensional Banach space $E$ together with a norm one projection $P_E:E\rightarrow E_0$ onto its $1$-complemented subspace $E_0$ and a compatible seminorm $S_X$,  and for every finite dimensional extension $F$ equipped with a norm one projection $P_F\supseteq P_E:F\rightarrow F_0$ onto its $1$-complemented subspace $F_0$ and a compatible seminorm $S_F\supseteq S_E$, for every $\varepsilon>0$ and for every $0$-morphism $\phi:(S_E,E)\hookrightarrow (S_\Gur^{\mathbb{H}},\Gur)$ such that for every $x\in E$ we have $\phi\circ P_E(x)=P\circ\phi(x)$ there exists an extension $\bar{\phi}\supseteq \phi:(S_F,F)\hookrightarrow (S_\Gur^{\mathbb{H}},\Gur)$ such that $\bar{\phi}$ is an $\varepsilon$-morphism and again for every $x\in F$ we have $\bar{\phi}\circ P_F(x)=P\circ\bar{\phi}(x)$.

Consequently, for every separable Banach space $X$ with a norm one projection $p:X\rightarrow X_0$ there exists a linear isometry $\phi:X\hookrightarrow \Gur$ such that for every $x\in X$ we have $\dist(x,X_0)=\dist(\phi(x),\mathbb{H})$, and in particular, $P\circ\phi(x)=\phi\circ p(x)$.
\end{remark}
\begin{remark}
The operator $P$ from Theorem \ref{Guthm2} is thus a universal and homogeneous projection. As pointed out to us by the referee, every operator $T:X\rightarrow Y$ (of norm $1$) can be lifted to a (norm $1$) projection. Indeed, just consider the sum $X\oplus_1 Y$ and define a projection $\bar{T}:X\oplus_1 Y\rightarrow Y$ by $\bar{T}(x,y):=T(x)+y$. It follows that every universal projection is actually a universal operator. Both the projection $P$ from Theorem \ref{Guthm2} and the universal operators from \cite{GarKub} and \cite{CGK} are characterized uniquely with respect to their universality properties, which are however formally different. It is thus left open whether these operators are the same.
\end{remark}
Before we prove these two theorems we present a construction of the Gurarij space that is `Fra\" iss\' e like' in the classical sense. Then we will be able to prove Theorems \ref{Guthm1}, resp. \ref{Guthm2} using minor modifications of proofs of Theorems \ref{thm1}, resp. \ref{thm2}.
\begin{defin}\label{partialnorm}
Let $X$ be a vector space and let $A\subseteq X$ be a subset such that $\Span\{A\}=X$. A partial $A$-norm $\norm_A$ is a non-negative real function which behaves like a norm except that it is defined only on $A$; i.e. for every $x,y\in A$ and $\alpha\in \Rea$ we have
\begin{itemize}
\item $\|x\|_A=0$ iff $x=0$,
\item $\|\alpha\cdot x\|_A=|\alpha|\cdot \|x\|_A$ if $\alpha\cdot x\in A$,
\item $\|x+y\|_A\leq \|x\|_A+\|y\|_A$ if $x+y\in A$.

\end{itemize}
\end{defin}
\begin{fact}\label{partialnormext}
Let $X$ be a vector space, $A\subseteq X$ a subset such that $\Span\{A\}=X$ and let $\norm_A:A\rightarrow \Rea$ be a partial $A$-norm. Then for any $x\in X$ the formula  $$\|x\|^A_X=\inf\{|\alpha_1|\cdot \|a_1\|_A+\ldots+|\alpha_n|\cdot\|a_n\|_A:$$ $$a_1,\ldots,a_n\in A,\alpha_1\cdot a_1+\ldots+\alpha_n\cdot a_n=x\}$$ defines a maximal seminorm $\norm^A_X$ on $X$ that extends $\norm_A$.

If $X$ is finite dimensional and $A$ is finite, then $\norm^A_X$ is actually a norm and the infimum in the formula may be replaced by minimum.
\end{fact}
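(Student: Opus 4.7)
The plan is to verify the three seminorm properties directly from the formula, establish that $\|\cdot\|_X^A$ is the largest seminorm on $X$ dominated by $\|\cdot\|_A$ on $A$ (which is the precise content of ``maximal extension''), and then deduce the finite-dimensional statement from a Minkowski-functional reformulation.

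First I would verify the seminorm properties. Non-negativity is built into the definition. For positive homogeneity, scaling any representation $x=\sum_i\alpha_i a_i$ by $\lambda$ gives a representation $\lambda x=\sum_i(\lambda\alpha_i) a_i$ of cost $|\lambda|$ times the original, so $\|\lambda x\|_X^A\le |\lambda|\|x\|_X^A$; applying the same to $\lambda^{-1}$ yields equality when $\lambda\ne 0$, and the $\lambda=0$ case follows from the trivial representation $0=0\cdot a$ of cost zero. Subadditivity is obtained by concatenating representations of $x$ and $y$. For the extension property, the trivial representation $a=1\cdot a$ gives $\|a\|_X^A\le \|a\|_A$ for every $a\in A$; conversely, for any seminorm $\rho$ on $X$ with $\rho(a)\le \|a\|_A$ on $A$, any representation $x=\sum_i\alpha_i a_i$ satisfies $\rho(x)\le \sum_i|\alpha_i|\rho(a_i)\le \sum_i|\alpha_i|\|a_i\|_A$, and taking the infimum gives $\rho(x)\le \|x\|_X^A$. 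Hence $\|\cdot\|_X^A$ is the largest seminorm on $X$ majorised by the partial norm on $A$, which is what ``maximal'' means here.

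For the finite-dimensional case, I would reinterpret the formula as a Minkowski functional. Discarding any $a\in A$ with $a=0$ (on which $\|\cdot\|_A$ vanishes by the first axiom), the remaining elements all have strictly positive partial norm, so $B=\{\pm a/\|a\|_A:a\in A\setminus\{0\}\}$ is a finite symmetric set still spanning $X$. The substitution $\beta_i=\alpha_i\|a_i\|_A$ identifies $\|x\|_X^A$ with the Minkowski functional of the convex hull $K=\mathrm{conv}(B)$, which is a compact centrally symmetric polytope containing the origin in its interior; hence $\|\cdot\|_X^A$ is a genuine norm and the infimum is attained on the compact intersection of $tK$ with the affine set of representations of $x$. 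The delicate point I anticipate is the precise sense in which $\|\cdot\|_X^A$ extends $\|\cdot\|_A$ with equality on $A$: the three partial-norm axioms alone do not preclude pathological configurations (for instance $e_1=\tfrac12(e_1+e_2)+\tfrac12(e_1-e_2)$ with very small values assigned to $e_1\pm e_2$), so equality on $A$ genuinely depends on consistency of the given partial norm, which in the constructions of this paper will hold by how the partial norms are built.
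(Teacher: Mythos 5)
Your verification of the seminorm axioms and of maximality coincides with the paper's (which declares the first part ``easy to check'' and gives exactly your domination argument for maximality). For the finite-dimensional claim you take a genuinely different route: you identify $\|\cdot\|^A_X$ with the Minkowski functional of the compact symmetric polytope $\mathrm{conv}\{\pm a/\|a\|_A: a\in A\setminus\{0\}\}$, whereas the paper fixes $x$, takes the cost $\delta$ of one representation, and argues that the set $K_x$ of coefficient tuples $(\beta_1,\dots,\beta_n)$ with $\sum_i\beta_i a_i=x$ and $\sum_i|\beta_i|\,\|a_i\|_A\le\delta$ is compact, so the continuous cost function attains its minimum on it. Both arguments are valid, and both need the observation you make explicitly (and the paper does not): elements $a\in A$ with $\|a\|_A=0$ must equal $0$ and should be discarded, since otherwise $K_x$ need not be bounded and your set $B$ would not be defined. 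Your route has the additional merit of making non-degeneracy transparent ($0$ lies in the interior of the polytope because $B$ spans $X$), where the paper only asserts that ``it also follows'' that $\|\cdot\|^A_X$ is a norm.

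The delicate point you flag is real, and it is a defect of the statement that the paper's proof does not address. Take $X=\Rea^2$ and $A=\{e_1,\ e_1+e_2,\ e_1-e_2\}$: no sum of two elements of $A$ lies in $A$ and no nontrivial scalar multiple of an element of $A$ lies in $A$, so the three partial-norm axioms impose no constraints and one may set $\|e_1\|_A=1$ and $\|e_1\pm e_2\|_A=1/10$; then $\|e_1\|^A_X\le \tfrac12\cdot\tfrac1{10}+\tfrac12\cdot\tfrac1{10}=\tfrac1{10}<\|e_1\|_A$, so $\|\cdot\|^A_X$ does not extend $\|\cdot\|_A$ (indeed no seminorm does). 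What is actually provable, and what you prove, is that $\|\cdot\|^A_X$ is the largest seminorm dominated by $\|\cdot\|_A$ on $A$, and that it extends $\|\cdot\|_A$ if and only if some seminorm does. The Fact is therefore correct only under an additional consistency hypothesis on the partial norm; this hypothesis does hold for the partial norms arising later in the paper (restrictions of genuine norms, their amalgams, and one-point extensions with values chosen large enough), but it must be verified at each use rather than read off from the Fact as stated. So your proposal is correct in everything it establishes, and its one omission is an accurate diagnosis of a gap in the statement rather than a flaw in your argument.
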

\begin{proof}
It is easy to check that it is a seminorm that extends $\norm_A$. Since every seminorm $\norm$ extending $\norm_A$ must satisfy the inequality $\|x\|\leq |\alpha_1|\cdot \|a_1\|_A+\ldots+|\alpha_n|\cdot\|a_n\|_A$ for every $a_1,\ldots,a_n\in A$ such that $\alpha_1\cdot a_1+\ldots+\alpha_n\cdot a_n=x$, we have that $\norm^A_X$ is maximal.

Now suppose that $X$ is finite dimensional and that $A=\{a_1,\ldots,a_n\}$. Fix some $x\in X$ and take some $\alpha_1,\ldots,\alpha_n\in \Rea$ such that $\alpha_1\cdot a_1+\ldots+\alpha_n\cdot a_n=x$. Let $\delta=|\alpha_1|\cdot \|a_1\|_A+\ldots+|\alpha_n|\cdot\|a_n\|_A$, thus $\|x\|^A_X\leq\delta$. The following is a compact subset of $\Rea^n$: $K_x=\{(\beta_1,\ldots,\beta_n):\beta_1\cdot a_1+\ldots+\beta_n\cdot a_n=x,|\beta_1|\cdot \|a_1\|_A+\ldots+|\beta_n|\cdot\|a_n\|_A\leq \delta\}$. Moreover, the map $(\beta_1,\ldots,\beta_n)\to |\beta_1|\cdot \|a_1\|_A+\ldots+|\beta_n|\cdot\|a_n\|_A$ is continuous, thus attains the minimum at some tuple $(\beta_1,\ldots,\beta_n)\in K_x$. It also follows that $\norm^A_X$ is a norm.
\end{proof}
\begin{defin}
Let $\Age$ be the following class of Banach spaces. We have that $X\in \Age$ if:
\begin{itemize}
\item $X$ is a finite dimensional vector space with a specified basis $(x_1,\ldots,x_n)$,
\item the norm $\norm$ on $X$ is of the form $\norm^A_X$, i.e. determined by a partial norm $\norm_A$. Moreover, we demand that $A$ is a finite subset of $X$ containing the basis such that each element of $A$ is a linear combination of elements of the basis using only rational scalars, and $\norm_A:A\rightarrow \Rat$ is a partial norm taking values only in the rationals.

\end{itemize}
The class of embeddings $\mathcal{E}$ consists only of those linear isometric embeddings between elements of $\Age$ that send elements of basis to elements of basis; i.e. if $X,Y\in \Age$, where the basis of $X$ is $(x_1,\ldots,x_n)$ and the basis of $Y$ is $(y_1,\ldots,y_m)$, then an allowed linear isometric embedding from $X$ into $Y$ is determined by an injection $\iota:\{1,\ldots,n\}\rightarrow \{1,\ldots,m\}$. We shall call such linear embeddings \emph{proper}.
\end{defin}
\begin{remark}
Although the definition of the class $\Age$ above is more suitable for our purposes, we note that it is nothing else but the class of finite dimensional \emph{rational} Banach spaces as considered e.g. in \cite{GarKub2}. We recall that a finite dimensional Banach space $X$ is called rational if it is isometric to $(\Rea^n,\norm)$ such that the unit sphere is a polytope whose vertices have rational coordinates. However, given $X\in\Age$ whose norm is determined by some finite $A$ we see that the unit sphere is the convex hull of $\{a/\|a\|:a\in A\}$, where clearly each $a/\|a\|$ has rational coordinates in the specified basis.
\end{remark}
\begin{fact}\label{GurFraisse}
$\Age$ is a Fra\" iss\' e class.
\end{fact}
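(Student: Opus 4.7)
The plan is to verify the three Fra\" iss\' e axioms for $\Age$ in turn: countability, joint embedding, and amalgamation. Countability is bookkeeping — a structure $X\in\Age$ is specified up to proper isomorphism by its dimension $n$, the finite subset $A\subseteq \Rat^n$ (containing the standard basis, using the distinguished basis to identify $X$ with a rational span inside $\Rat^n$), and the finitely many rational values of $\norm_A$ on $A$; there are only countably many such pieces of data. Joint embedding will drop out of the amalgamation construction applied over the zero subspace, so the real work is amalgamation.

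For amalgamation, suppose $\iota_Y\colon X\hookrightarrow Y$ and $\iota_Z\colon X\hookrightarrow Z$ are proper isometric embeddings in $\Age$. First, I would enlarge $A_Y$ to include $\iota_Y(A_X)$, declaring the partial-norm value at each new element $\iota_Y(a)$ to be $\|a\|_{A_X}$, and similarly for $A_Z$. This does not alter the structure: both $\norm_Y$ and the new maximal seminorm $\norm^{A_Y\cup\iota_Y(A_X)}_Y$ are seminorms extending the enlarged rational partial norm (the first because $\iota_Y$ is isometric, the second by Fact \ref{partialnormext}), so by the maximality characterization of Fact \ref{partialnormext} applied in both directions they coincide. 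Having done so, I would form the algebraic pushout $W=Y\oplus_X Z$ in vector spaces, whose distinguished basis is the union of the bases of $Y$ and $Z$ identified along the image of the basis of $X$. Inside $W$ set $A:=A_Y\cup A_Z$, which meets exactly along $\iota_Y(A_X)=\iota_Z(A_X)$, where the two partial norms agree; so $\norm_A$ is a well-defined, rational-valued partial $A$-norm on $W$, and Fact \ref{partialnormext} yields a norm $\norm^A_W$ making $W$ an element of $\Age$, into which $Y$ and $Z$ embed properly along the canonical maps.

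The main obstacle — essentially the only part that requires care — is to check that these canonical inclusions $Y,Z\hookrightarrow W$ are in fact isometric, so that the resulting structure really amalgamates $Y$ and $Z$ over $X$. The easy direction, $\|y\|^A_W\leq\|y\|_Y$, is immediate because any $A_Y$-representation is an $A$-representation. For the reverse direction, the argument I have in mind is the pushout trick: given any representation $y=\sum_i\alpha_i a_i+\sum_j\beta_j b_j$ with $a_i\in A_Y$ and $b_j\in A_Z$, set $u=\sum_i\alpha_i a_i\in Y$ and $v=\sum_j\beta_j b_j\in Z$. Then $v=y-u\in Y$ as well, so by the defining property of the pushout $v$ lies in the identified copy of $X$, whence $\|v\|_Y=\|v\|_Z$. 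Combining the triangle inequality in $Y$ with the characterization of $\norm_Y=\norm^{A_Y}_Y$ and $\norm_Z=\norm^{A_Z}_Z$ from Fact \ref{partialnormext} gives
\[
\|y\|_Y\leq \|u\|_Y+\|v\|_Y=\|u\|_Y+\|v\|_Z\leq \sum_i|\alpha_i|\,\|a_i\|_{A_Y}+\sum_j|\beta_j|\,\|b_j\|_{A_Z}.
\]
Taking the infimum over representations yields $\|y\|_Y\leq \|y\|^A_W$, and swapping the roles of $Y$ and $Z$ handles $Z$. Joint embedding then follows by specializing the construction to $X=\{0\}$, completing the verification that $\Age$ is a Fra\" iss\' e class.
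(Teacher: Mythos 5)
Your proposal is correct and follows essentially the same route as the paper: form the algebraic pushout $Y\oplus_X Z$, take the union $A_Y\cup A_Z$ of the generating sets as a partial norm on it, and let Fact \ref{partialnormext} produce the amalgam norm. The only difference is one of detail — the paper declares it ``straightforward to check'' (deferring to \cite{Do}) that the inclusions into the amalgam are isometric, whereas you actually verify this via the observation that $Y\cap Z=X$ inside the pushout, which is exactly the missing computation.
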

\begin{proof}
It immediately follows that $\Age$ is countable.

Let us check the amalgamation property. Let $X_0,X_1,X_2\in \Age$, where $X_0$ is a common subspace of $X_1$ and $X_2$. Moreover, we have that $(x_1,\ldots,x_n)$ is a basis of $X_0$, $(x_1,\ldots,x_n,y_1,\ldots,y_m)$ is a basis of $X_1$ and $(x_1,\ldots,x_m,z_1,\ldots,z_k)$ is a basis of $X_2$. The amalgam space $X_3$ is algebraically nothing else but the amalgamated direct sum $X_1\oplus_{X_0} X_2$ of $X_1$ and $X_2$ over $X_0$. The norm is defined again in a standard way, i.e the amalgamation norm (analogous to the greatest metric amalgamation) $\|x-y\|=\inf\{\|x-z\|_{X_1}+\|z-y\|_{X_2}:z\in X_0\}$ for $x\in X_1$, $y\in X_2$. To check that $X_3\in \Age$ first observe that it is a vector space with basis $(x_1,\ldots,x_n,y_1,\ldots,y_m,z_1,\ldots,z_k)$. If $\norm_{X_1}$ was given by some partial norm $\norm_{A_1}$ for finite $A_1\subseteq X_1$, and $\norm_{X_2}$ was given by some partial norm $\norm_{A_2}$ for finite $A_2\subseteq X_2$, then considering $A_1,A_2$ as subsets of $X_3$ we can form a partial 
norm $\norm_{A_3}$ defined on $A_3=A_1\cup A_2$ so that for $a\in A_3$, $\|a\|_{A_3}$ is equal to $\|a\|_{A_1}$ if $a\in A_1$ and equal to $\|a\|_{A_2}$ if $a\in A_2$. It is straightforward to check that this defines a partial norm on $A_3$ and that the extension $\norm^{A_3}_{X_3}$ is the amalgam norm on $X_3$. For a reader unfamiliar with amalgam metrics, resp. norms, we refer to our paper \cite{Do} where a similar fact was verified for norms (resp. invariant metrics) on abelian groups.

The joint embedding property is similar, only easier.
\end{proof}
Thus there exists the Fra\" iss\' e limit $G$, a direct limit of some countable sequence $X_1\to X_2\to\ldots$ from $\Age$. The following extension property follows from the Fra\" iss\' e theorem.
\begin{fact}\label{extGur}
Let $Y\in \Age$ and let there be a proper linear embedding $\phi:X_n\hookrightarrow Y$ for some $n$. Then there exist $m>n$ and a proper linear embedding $\psi:Y\hookrightarrow X_m$ such that $\psi\circ\phi=\subseteq _{n\to m}$, where $\subseteq_{n\to m}$ is the inclusion proper embedding from $X_n$ into $X_m$.
\end{fact}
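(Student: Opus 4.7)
The plan is to derive Fact \ref{extGur} as an immediate consequence of the extension property of the Fra\" iss\' e limit $G$ stated in Theorem \ref{thmFraisse}. Taking $A=X_n$, $B=Y$, letting $\rho:=\phi:X_n\hookrightarrow Y$ be the given proper embedding, and letting $\iota_A:X_n\hookrightarrow G$ be the canonical inclusion coming from the direct system $X_1\to X_2\to\ldots$, the second bullet of Theorem \ref{thmFraisse} supplies a proper embedding $\iota_B:Y\hookrightarrow G$ satisfying $\iota_B\circ\phi=\iota_A$.

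To pass from an embedding into $G$ to an embedding landing in some $X_m$, I would invoke the convention recorded just before Theorem \ref{thmFraisse}: membership of a map in $\mathcal{E}$ for an embedding into the direct limit is already witnessed at some finite stage. Hence $\iota_B$ factors as $\iota_B=j_m\circ\psi$ for some proper $\psi:Y\hookrightarrow X_m$, where $j_m:X_m\hookrightarrow G$ denotes the canonical map from the direct system. By composing $\psi$ with further chain inclusions $\subseteq_{m\to m'}$ if necessary, I may in addition arrange $m>n$.

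It then remains to verify $\psi\circ\phi=\subseteq_{n\to m}$. Composing $\iota_B=j_m\circ\psi$ on the right with $\phi$ and using $\iota_B\circ\phi=\iota_A=j_m\circ\subseteq_{n\to m}$ gives $j_m\circ(\psi\circ\phi)=j_m\circ\subseteq_{n\to m}$, and injectivity of $j_m$ (as a proper, hence isometric, linear embedding) yields the desired equality. The only point requiring any care, and what I would flag as the main obstacle, is that the factoring at stage $X_m$ is still a \emph{proper} embedding in $\mathcal{E}$, i.e.\ that $\psi$ sends basis vectors to basis vectors; this is precisely the content of the convention about direct limits in the preliminaries, so the entire argument is bookkeeping on top of the Fra\" iss\' e theorem.
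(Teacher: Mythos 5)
Your argument is correct and is exactly the route the paper intends: the paper gives no separate proof of Fact \ref{extGur}, simply asserting that it ``follows from the Fra\"iss\'e theorem,'' and your proposal fills in precisely that derivation (second bullet of Theorem \ref{thmFraisse} plus the stated convention that membership of an embedding into the direct limit in $\mathcal{E}$ is witnessed at a finite stage, followed by cancelling the injective map $j_m$). Nothing to add.
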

It is a separable normed space and we take the completion which we shall denote by $\Gur$.
\begin{thm}
$\Gur$ is the Gurarij space.
\end{thm}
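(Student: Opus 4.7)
The plan is to verify the defining $\varepsilon$-extension property of Definition \ref{defGur} by combining Fact \ref{extGur} with rational approximation of arbitrary finite-dimensional Banach spaces by objects of $\Age$, and then correcting an approximate extension to an exact one via a bounded linear projection.

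\textbf{Rational approximation.} I would first show that for any finite-dimensional Banach space $F$ with a chosen basis and any $\eta>0$, there exists $F'\in\Age$ on the same underlying vector space (with the same basis) such that the identity $F\to F'$ is an $\eta$-isometry. The construction is to take a finite set $A$ of rational linear combinations of the basis containing an $\eta$-net on the unit sphere, and define $\|a\|_A$ to be a rational approximation of $\|a\|_F$; the maximality in Fact \ref{partialnormext} then forces $\|\cdot\|^A_{F'}$ to be uniformly close to $\|\cdot\|_F$. A relative version also holds: given a pair $E\subseteq F$ and $E'\in\Age$ approximating $E$, one can extend $E'$ to $F'\in\Age$ containing $E'$ properly and approximating $F$ by enlarging $A$ accordingly.

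\textbf{Approximate extension.} Given an isometric $\phi:E\hookrightarrow\Gur$, an extension $E\subseteq F$, and $\eta>0$, I would fix an Auerbach basis $(e_1,\ldots,e_k)$ of $E$ and use density of $\bigcup_n X_n$ in $\Gur$ to find some $n$ and vectors $v_i\in X_n$ with $\|v_i-\phi(e_i)\|<\eta$. Form the algebraic amalgam $Y=X_n\oplus_{E}F$ in which $e_i$ is identified with $v_i$, equipped with the norm analogue of the greatest amalgamation (Definition \ref{definamal}), which restricts to $\|\cdot\|_{X_n}$ on $X_n$ and, thanks to the Auerbach choice, differs from $\|\cdot\|_F$ on $F$ by $O(\eta)$. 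Applying the relative approximation of the previous step to $X_n\subseteq Y$ produces $Y'\in\Age$ with $X_n$ as a proper subspace and $Y'$ close to $Y$. Fact \ref{extGur} then extends the inclusion $X_n\subseteq\Gur$ to a proper embedding $Y'\hookrightarrow\Gur$, and composition with the near-identification $F\to Y'$ yields a linear $\psi:F\to\Gur$ that is an $O(\eta)$-isometry with $\|\psi|_E-\phi\|=O(\eta)$.

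\textbf{Correction and main obstacle.} To obtain an exact extension, I invoke the Auerbach lemma to get a bounded projection $\pi:F\to E$ with $\|\pi\|\leq\dim E$, and set $\bar{\phi}(x):=\psi(x)+(\phi-\psi|_E)\circ\pi(x)$. Then $\bar{\phi}|_E=\phi$ exactly, while $\|\bar{\phi}-\psi\|=O(\eta\dim E)$, so $\bar{\phi}$ is an $O(\eta\dim E)$-isometry; picking $\eta$ small enough gives the $\varepsilon$-isometric extension required. The main obstacle is the approximate extension step, where one must reconcile the rigid basis-preserving morphisms of $\Age$ with an essentially arbitrary isometric embedding $\phi$. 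The key technical point is to verify that the amalgamated norm on the $F$-part of $Y$ is genuinely close to $\|\cdot\|_F$ when the identification $e_i\leftrightarrow v_i$ is only a near-isometry; choosing an Auerbach basis controls the relevant condition number so that $\eta$-perturbations of the basis vectors translate to $O(\eta)$-perturbations of all norms on $F$.
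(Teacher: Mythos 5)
Your route is genuinely different from the paper's. The paper reduces to codimension-one extensions and proves the stronger Claim~\ref{Gurclaim}: every $\varepsilon'$-isometry of $E$ (not merely every isometry) extends to an $\varepsilon$-isometry of $F=\Span\{E,v\}$, with $\varepsilon'$ tuned to $\mathrm{dist}(v,E)$. Since only the single new vector $v$ receives a new image $w$ --- produced by a one-point extension of the partial norm over an $\eta$-net in a large ball $R\cdot B_E$ and realized in $\Gur$ via Fact~\ref{extGur} --- the extension restricts to the original $\phi$ on $E$ by construction, and no correction is needed; the price is the induction through $E=E_0\leq\dots\leq E_n=F$ with constants $\varepsilon_i<\min\{\varepsilon_{i+1}/2,\varepsilon_{i+1}\delta_i/10\}$. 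Your one-shot amalgamation followed by the Auerbach correction $\bar{\phi}=\psi+(\phi-\psi|_E)\circ\pi$ is a clean alternative, and the final bookkeeping (an $O(\eta\dim E)$-isometry extending $\phi$ exactly, with $\eta$ chosen last) is sound.

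There is, however, a genuine gap in the approximate-extension step, precisely where you locate the ``main obstacle.'' You assert that the greatest-amalgamation norm on $Y=X_n\oplus_E F$, glued along the near-isometric identification $e_i\leftrightarrow v_i$, restricts to $\|\cdot\|_{X_n}$ on $X_n$. It does not: for $z\in X_n$ the amalgam norm is $\inf_{a}\bigl\{\|z-\sum_i a_iv_i\|_{X_n}+\|\sum_i a_ie_i\|_F\bigr\}$, and since $\|\sum_i a_ie_i\|_F$ can undercut $\|\sum_i a_iv_i\|_{X_n}$ by roughly $\eta\sum_i|a_i|$, this infimum can fall below $\|z\|_{X_n}$ by a factor $1-O(\eta\dim E)$; the Auerbach choice bounds the defect but does not make it vanish. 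This matters because Fact~\ref{extGur} applies only to \emph{proper}, hence exactly isometric, embeddings of $X_n$ into $Y'$, so even an $O(\eta)$ distortion of $X_n$ inside $Y$ blocks the appeal to the Fra\"iss\'e extension property. The standard repair is the amalgam of Lemma 2.1 of \cite{KuSo} (which the paper itself invokes later, in the proof of Theorem~\ref{Guthm1}): one adds a penalty term of order $\eta\|x'\|$ to the infimum defining the amalgam norm on $X_n\oplus F$, which makes both canonical embeddings exactly isometric while keeping the images of $v_i$ and $e_i$ at distance $O(\eta)$. With that substitution --- plus the routine check, as in Claim~\ref{thm1claim1}, that the rational partial norm defining $Y'$ can be chosen to extend the one defining $X_n$ without creating shortcuts through the new elements --- your argument goes through and yields the property of Definition~\ref{defGur}.
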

\begin{proof}
We need to check the condition from Definition \ref{defGur}. It is sufficient to prove the following:
\begin{claim}\label{Gurclaim}
For every $\varepsilon>\varepsilon'> 0$, every finite dimensional Banach spaces $E\subseteq F$, where 
\begin{itemize}
\item $E$ is of co-dimension $1$ in $F$, $F=\Span\{E,v\}$ and $v\in F\setminus E$ such that $\|v\|=1$,
\item $\varepsilon '<\min\{\varepsilon/4,\frac{\varepsilon\cdot\delta}{14}\}$, where  $\delta=\mathrm{dist}(v,E)=\inf\{\|v-x\|:x\in E\}$,

\end{itemize}
we have that any $\varepsilon '$-isometry $\phi :E\hookrightarrow \Gur$ extends to $\bar{\phi}\supseteq \phi:F\hookrightarrow \Gur$ such that $\bar{\phi}$ is an $\varepsilon$-isometry.
\end{claim}
Indeed, suppose that we have proved Claim \ref{Gurclaim} and we are given subspaces $E\subseteq F$, $\varepsilon>0$ and an isometry $\phi:E\hookrightarrow \Gur$. Suppose the co-dimension of $E$ in $F$ is $n$ and $E$ has basis $\{e_1,\ldots,e_m\}$, which can be extended to a basis $\{e_1,\ldots,e_m,f_1,\ldots,f_n\}$ of $F$. Then the extension of $\phi$ to $\bar{\phi}$ is done using Claim \ref{Gurclaim} $n$-times through spaces $E=E_0\leq\ldots\leq E_i=\Span\{E,f_1,\ldots,f_i\}\leq\ldots E_n=F$ so that the extension $\phi_i\supseteq \ldots\supseteq\phi: E_i\hookrightarrow \Gur$ is an $\varepsilon_i$-isometry, where $\varepsilon_i<\min\{\varepsilon_{i+1}/4,\frac{\varepsilon_{i+1}\cdot \delta_i}{14}\}$, where $\delta_i=\mathrm{dist}(E_i,f_{i+1})$.\\

Thus we need to prove Claim \ref{Gurclaim}. Suppose that $E$ is a subspace of co-dimension $1$ in a finite dimensional Banach space $F\supseteq E$. Let $(e_1,\ldots,e_n)$ be a basis of $E$ and $(e_1,\ldots,e_n,v)$ a basis in $F$ such that $\|v\|=1$. Let $\varepsilon>\varepsilon '>0$ be given, where $\varepsilon '<\min\{\varepsilon/4,\frac{\varepsilon\cdot\delta}{14}\}$,  $\delta=\mathrm{dist}(v,E)$. Moreover suppose we have an $\varepsilon '$-isometry $\phi:E\hookrightarrow \Gur$.

For any $\gamma>0$ and $i\leq n$ we can find $g_i\in G\subseteq \Gur$ such that $\|g_i-\phi(e_i)\|<\gamma$. Moreover, there exists some $m$ such that $g_i\in X_m\subseteq G$ for all $i$ (recall again that $G$ is a direct limit of some sequence $(X_i)_i$ from $\Age$). We may even assume that each $g_i$ is a rational linear combination of elements of the specified basis of $X_m$. It is clear that if we take $\gamma>0$ small enough then the linear map $\phi':E\hookrightarrow X_m$ determined by sending $e_i$ to $g_i$ is an $\varepsilon ''$-isometry for some $\varepsilon ''<\min\{\varepsilon/4,\frac{\varepsilon\cdot\delta}{14}\}$.

Take now $R>0$ large enough to be specified later, and $\varepsilon '>\eta>0$, and find some finite $\eta$-net $(z_j)_j$ in $R\cdot B_E$, the compact ball of radius $R$ in $E$. We may assume that each $z_j$ from the net is a rational linear combination of the basis elements $\{e_1,\ldots,e_n\}$. Consider now an (abstract) extension $Z$ of $X_m$ generated by $X_m$ and one additional vector $w$ of norm $1$. The norm on $X_m$ (which is a restriction of the norm on $\Gur$ to this subspace) is an extension of some rational partial norm $\norm_A$, where $A$ is a finite set of rational linear combinations of the basis of $X_m$. Extend $A$ to $\bar{A}$ so that it contains $\phi'(z_j)$ for every $j$ and $w$. Note that each $z_j$ is a rational linear combination of basis elements $\{e_1,\ldots,e_n\}$ and $\phi'(e_i)$ is a linear combination of basis elements of $X_m$, thus $\phi'(z_j)$ is also a rational linear combination of basis elements in $X_m$. Extend the partial rational norm $\norm_A$ on $A$ to a partial 
rational norm $\norm_{\bar{A}}$ on $\bar{A}$ so that for every $j$ we have 
\begin{equation}\label{Gurclaimeq}
| \|z_j-v\|-\|\phi'(z_j)-w\|_{\bar{A}}|<\varepsilon ''.
\end{equation}
That can be done as in the proof of Claim \ref{thm1claim1}. We consider the norm $\norm_Z^{\bar{A}}$ on $Z$ that extends the partial norm $\norm_{\bar{A}}$; it coincides with the $X_m$-norm on the subspace $X_m$. By Fact \ref{extGur} this `abstract' extension $Z$ is realized in $G\subseteq \Gur$, thus we may suppose that actually $w$ is an element of $\Gur$.

We now claim that the extension $\bar{\phi}\supseteq \phi:F\hookrightarrow \Gur$ determined by sending $v$ to $w$ is as desired. It suffices to check that for any $y\in E$ we have $$|\|v-y\|-\|w-\phi(y)\| |<\varepsilon\cdot\|v-y\|.$$ Suppose at first that $y\in R\cdot B_E$. Then we pick some $z_j$ from the $\eta$-net such that $\|y-z_j\|<\eta$. Then we have $$| \|v-y\|-\|w-\phi(y)\| |\leq $$ $$| \|v-z_j\|-\|w-\phi '(z_j)\|+\|y-z_j\|+\|\phi(y)-\phi(z_j)\|+\|\phi(z_j)-\phi '(z_j)\| |<$$ 
$$\varepsilon ''+\eta + \varepsilon '\cdot \eta +(\varepsilon'+\varepsilon'')\cdot\|z_j\|,$$
where we used that $| \|v-z_j\|-\|w-\phi '(z_j)\| |<\varepsilon''$ by \eqref{Gurclaimeq}, $\|y-z_j\|<\eta$ by the choice of $z_j$, $\|\phi(y)-\phi(z_j)\|<\varepsilon'\cdot \eta$ since $\phi$ has norm less than $\varepsilon'$, and $\|\phi(z_j)-\phi '(z_j)\|<(\varepsilon'+\varepsilon'')\cdot \|z_j\|$ since $\phi'$ has norm less than $\varepsilon''$.\\

Then we get
$$\varepsilon ''+\eta + \varepsilon '\cdot \eta +(\varepsilon'+\varepsilon'')\cdot\|z_j\|<\varepsilon ''+\eta + \varepsilon '\cdot \eta +(\varepsilon'+\varepsilon'')\cdot(\eta+\|y\|)<$$ $$\varepsilon ''+\eta + \varepsilon '\cdot \eta +2\varepsilon ''\cdot(\eta+1+\|v-y\|)< \varepsilon ''+\varepsilon ''+\varepsilon ''+2\varepsilon ''+2\varepsilon ''+2\varepsilon''\cdot \|v-y\|<$$ $$\frac{\varepsilon\cdot \mathrm{dist}(v,E)}{2}+\frac{\varepsilon\cdot \|v-y\|}{2}<\varepsilon\cdot\|v-y\|,$$
where in the last inequality we used that $7\varepsilon''<7\frac{\dist(v,E)}{14}=\frac{\dist(v,E)}{2}$ and that $2\varepsilon''\cdot\|v-y\|<2\varepsilon/4\cdot\|v-y\|$.\\
 
Suppose now that $y\notin R\cdot B_E$. Then we have $$| \|v-y\|-\|w-\phi(y)\| |\leq $$ $$\|v\|+\|w\|+\|y-\phi(y)\| \leq 2+\varepsilon '\cdot \|y\|\leq 3+\varepsilon '\cdot \|v-y\|\leq$$ $$\frac{3}{R-1}\cdot \|v-y\|+\varepsilon '\cdot\|v-y\|=(\frac{3}{R-1}+\varepsilon ')\cdot\|v-y\|.$$ Clearly, if $R$ is large enough, $\frac{3}{R-1}+\varepsilon '$ is less than $\varepsilon$.
\end{proof}
Knowing the construction of the Gurarij space which is in the similar vein as the construction of the Urysohn space we will be rather easily able to transfer the results about universal structures on the Urysohn space to analogous results on the Gurarij space, i.e. Theorems \ref{Guthm1} and \ref{Guthm2}. The proofs will be sketchy as it is a repetition of very similar arguments to those from the section on the Urysohn space.\\

\begin{proof}[Proof of Theorem \ref{Guthm1}.]
We shall define an appropriate Fra\" iss\' e class. Before doing so, analogously as in Definition \ref{partialnorm} and Fact \ref{partialnormext} we define a partial seminorm and show how to extend it.

Let $X$ be a normed space and $A\subseteq X$ subset such that $\Span\{A\}=X$. Let $S_A:A\rightarrow \Rea^+_0$ be a partial $1$-Lipschitz seminorm.

Then we can consider as in Fact \ref{partialnormext} the greatest extension of $S_A$ to $S_X:X\rightarrow \Rea^+_0$ as follows: for any $x\in X$ we set $$S_X(x)=\inf\{|\alpha_1|\cdot S_A(x_1)+\ldots+|\alpha_n|\cdot S_A(x_n):$$ $$x_1,\ldots,x_n\in A, x=\alpha_1\cdot x_1+\ldots+\alpha_n\cdot x_n,\alpha_1\geq 0,\ldots,\alpha_n\geq 0\}.$$

\begin{defin}
$\Age_1$ will be the following class of pairs of seminorms and Banach spaces. We have that $(S_X,X)\in \Age_1$ if:
\begin{itemize}
\item $X$ is a finite dimensional vector space with a specified basis $(x_1,\ldots,x_n)$,
\item the norm $\norm$ on $X$ is of the form $\norm^A_X$, i.e. determined by a partial norm $\norm_A$. We again demand that $A$ is a finite subset of $X$ containing the basis such that each element of $A$ is a linear combination of elements of the basis using only rational scalars, and $\norm_A:A\rightarrow \Rat$ is a partial norm taking values only in the rationals. Moreover, we shall assume that each basis element $x_i$, $i\leq n$, is of norm $1$.
\item $S_X$ is the (greatest) extension of some $1$-Lipschitz seminorm $S_A:A\rightarrow \Rat^+_0$ on $A$ taking only rational values.

\end{itemize}
We shall again consider only proper linear embeddings, i.e. those linear isometric embeddings between elements of $\Age_1$ that send elements of the basis to elements of basis, that are moreover $0$-morphisms, i.e. preserve the seminorms.
\end{defin}
The verification that $\Age_1$ is a Fra\" iss\' e class is essentially the same as in Fact \ref{GurFraisse} plus some some arguments from the proof of Theorem \ref{thm1} on the universal subset of the Urysohn space. The Fra\" iss\' e limit $(S,G)$ is a direct limit of some pairs $(S_1,X_1)\to (S_2,X_2)\to\ldots$. Let $H\subseteq G$ be the linear subspace $\{x\in G:S(x)=0\}$. We denote by $\Gur$ the completion of $G$, by $\mathbb{S}$ the unique extension of $S$ and by $\mathbb{H}$ the completion of the subspace $H$ in $\Gur$. We need to check that the pair $(\mathbb{S},\Gur)$ satisfies the condition from the statement of Theorem \ref{Guthm1}. It will be again sufficient to prove the following claim.
\begin{claim}\label{Gurclaim2}
For every $\varepsilon>\varepsilon'> 0$, every pairs $(S_E,E)\subseteq (S_F,F)$, where 
\begin{itemize}
\item $E,F$ are finite dimensional Banach spaces and $E$ is of co-dimension $1$ in $F$, $F=\Span\{E,v\}$ and $v\in F\setminus E$ such that $\|v\|=1$,
\item $S_E,S_F$ are seminorms where $S_F$ extends $S_E$,
\item $\varepsilon '<\min\{\varepsilon/2,\frac{\varepsilon\cdot\delta}{10}\}$, where  $\delta=\mathrm{dist}(v,E)=\inf\{\|v-x\|:x\in E\}$.

\end{itemize}
we have that any $\varepsilon '$-morphism $\phi :(S_E,E)\hookrightarrow (\mathbb{S},\Gur)$ extends to $\bar{\phi}\supseteq \phi:(S_F,F)\hookrightarrow (\mathbb{S},\Gur)$ such that $\bar{\phi}$ is an $\varepsilon$-morphism.
\end{claim}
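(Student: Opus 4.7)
The plan is to mirror the proof of Claim \ref{Gurclaim} while tracking the seminorm in parallel. First I would approximate $\phi(e_i)$ (with $(e_1,\ldots,e_n)$ the basis of $E$) by rational combinations $g_i$ of the specified basis of some $X_m$ from the defining sequence of $G$, with $\|g_i - \phi(e_i)\| < \gamma$. Since both $\norm$ and $\mathbb{S}$ are $1$-Lipschitz, the linear map $\phi' : E \to X_m$ sending $e_i \mapsto g_i$ is, for $\gamma$ sufficiently small, simultaneously an $\varepsilon''$-isometry in norm and an $\varepsilon''$-morphism with respect to the seminorm, where $\varepsilon''$ can be taken below any fixed threshold, in particular below $\min\{\varepsilon/2, \varepsilon\delta/10\}$.

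Next I would fix $R$ large (to be specified later) and $0 < \eta < \varepsilon'$, and choose a finite $\eta$-net $(z_j)_j$ in $R\cdot B_F$ with each $z_j = x_j + \alpha_j v$ a rational combination of the basis $(e_1,\ldots,e_n,v)$. The abstract extension is $Z = X_m \oplus \langle w\rangle$ with $\|w\|=1$. Let $A \subseteq X_m$ be the finite control set for the partial norm $\norm_A$ and the partial seminorm $S_A$. Enlarge $A$ to $\bar A \subseteq Z$ by adjoining $w$ and the vectors $\phi'(x_j) + \alpha_j w$. Following the norm-amalgamation device of Claim \ref{Gurclaim}, extend $\norm_A$ to rational values $\|\phi'(x_j)+\alpha_j w\|_{\bar A}$ within $\varepsilon''$ of $\|z_j\|_F$ so that $\norm_{\bar A}$ is again a partial norm. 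Simultaneously, extend $S_A$ to rational values $S_{\bar A}(\phi'(x_j)+\alpha_j w)$ within $\varepsilon''$ of $S_F(z_j)$, and $S_{\bar A}(w)$ within $\varepsilon''$ of $S_F(v)$. Setting $\norm_Z = \norm_Z^{\bar A}$ and $S_Z = S_Z^{\bar A}$ gives $(S_Z, Z)\in \Age_1$ containing $(S_{X_m}, X_m)$ as a proper substructure, so by the Fra\"{\i}ss\'e extension property (the seminormed analogue of Fact \ref{extGur}) this extension is realized inside $(\mathbb{S},G) \subseteq (\mathbb{S},\Gur)$; we may thus assume $w\in \Gur$.

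Define $\bar\phi: F \to \Gur$ by $\bar\phi|_E = \phi$ and $\bar\phi(v) = w$. That $\bar\phi$ is an $\varepsilon$-isometry in norm is verified verbatim as in Claim \ref{Gurclaim}, splitting into $y \in R\cdot B_E$ versus $y \notin R\cdot B_E$. For the seminorm, given $y = x + \alpha v$ with $y \in R\cdot B_F$, pick $z_j$ from the net with $\|y - z_j\|_F < \eta$; then the $1$-Lipschitz property of $\mathbb{S}$ and $S_F$, the design inequality $|S_{\bar A}(\phi'(x_j)+\alpha_j w) - S_F(z_j)| < \varepsilon''$, and the estimate $\|\bar\phi(y) - (\phi'(x_j)+\alpha_j w)\| \lesssim \eta + \gamma\cdot \|y\|$ together yield $|\mathbb{S}(\bar\phi(y)) - S_F(y)| < \varepsilon \cdot S_F(y)$ once $\gamma$, $\eta$, $\varepsilon''$ are small enough relative to $\varepsilon\delta$ and $R$ is chosen large. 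The case $y \notin R\cdot B_F$ is handled by the crude bound used for the norm.

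The main obstacle is the joint feasibility of the partial norm and partial seminorm extensions on $\bar A$: the prescribed rational values on the new vectors $\{w\}\cup\{\phi'(x_j)+\alpha_j w\}$ must simultaneously make $\norm_{\bar A}$ a partial norm extending $\norm_A$ and $S_{\bar A}$ a partial $1$-Lipschitz seminorm extending $S_A$ with respect to this new norm. Part (a) is exactly what is proved in Claim \ref{Gurclaim} using the interval trick from Claim \ref{thm1claim1}. For part (b), the $1$-Lipschitz constraint $|S_{\bar A}(a)-S_{\bar A}(b)| \leq \|a-b\|_{\bar A}$ reduces to the non-emptiness of a rational sub-interval at each new vector, which holds because $S_F$ is itself $1$-Lipschitz with respect to $\norm_F$ and the chosen partial norm on $\bar A$ dominates the amalgamated distances by construction; the $\varepsilon''$-slack in the prescribed seminorm values absorbs the error produced by the simultaneous $\varepsilon''$-slack in the prescribed norm values.
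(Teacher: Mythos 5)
Your proposal follows essentially the same route as the paper: the paper's own proof of Claim \ref{Gurclaim2} is only a brief reduction to Claim \ref{Gurclaim} (``the only difference is that besides the norm we also have to define the seminorm $S$ on this extension; we do it precisely the same as we did for the norm''), and your argument is a faithful and in fact more detailed expansion of exactly that plan --- approximate $\phi$ inside some $X_m$, prescribe rational partial norm and partial seminorm values on a net in the one-dimensional abstract extension $Z=\Span\{X_m,w\}$, and realize $Z$ via the Fra\"{\i}ss\'{e} extension property. The one delicate point, which your write-up shares with the paper's sketch, is that the relative bound $|\mathbb{S}(\bar{\phi}(y))-S_F(y)|<\varepsilon\cdot S_F(y)$ does not follow from a small absolute error alone when $S_F(y)$ is small or zero (the norm case is saved by the lower bound $\delta=\dist(v,E)>0$, for which the seminorm has no analogue), so on that step you are no less complete than the paper but also no more.
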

That is essentially the same as the proof of Claim \ref{Gurclaim}. We again define the mapping $\phi '$ that goes from $E$ to some $X_m\subseteq G\subseteq \Gur$ and sends basis elements of $E$ to rational linear combinations of basis elements of $X_m$. Then we again find an appropriate $\eta$-net in a large enough ball $R\cdot B_E$. When defining the abstract extension $Z=\Span\{X_m,w\}$ of $X_m$, the only difference is that besides the norm we also have to define the seminorm $S$ on this extension. We do it precisely the same as we did for the norm.\\

We show how the `homogeneity condition' from the statement of the theorem implies the universality. We do it analogously as in the proof of universality of the Gurarij space in \cite{KuSo}.

We need the following Lemma which is analogous to Lemma 2.1 in \cite{KuSo}.
\begin{lem}\label{Gurlemma}
Let $(S_X,X)$ and $(S_Y,Y)$ be pairs of finite dimensional Banach spaces together with seminorms, and let $\phi:(S_X,X)\rightarrow (S_Y,Y)$ be an $\varepsilon$-morphism for some $\varepsilon\geq 0$. Then there exist a pair $(S_W,W)$, where $W$ is finite dimensional, and $0$-morphisms $\iota_X:(S_X,X)\rightarrow (S_W,W)$ and $\iota_Y:(S_Y,Y)\rightarrow (S_W,W)$ such that $\|\iota_Y\circ \phi-\iota_X\|<\varepsilon$.
\end{lem}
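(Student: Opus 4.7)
The plan is to realize $W$ as a ``twisted pushout'' of $X$ and $Y$ along $\phi$, in the spirit of the Kubi\'s--Solecki construction of the Gurarij space, enriched to carry the seminorm data. Algebraically I would take $W := X\oplus Y$ (the vector-space direct sum, which is finite-dimensional) with $\iota_X(x):=(x,0)$ and $\iota_Y(y):=(0,y)$. The whole content of the lemma is the choice of norm and seminorm on $W$, and for these I propose
$$\|(x,y)\|_W := \inf_{z\in X}\bigl\{\|x+z\|_X + \|y-\phi(z)\|_Y + \varepsilon\,\|z\|_X\bigr\},$$
$$S_W(x,y)  := \inf_{z\in X}\bigl\{S_X(x+z) + S_Y(y-\phi(z)) + \varepsilon\, S_X(z)\bigr\}.$$
Both are subadditive and positively homogeneous by the standard infimum manipulations; $\|\cdot\|_W$ separates points because the $\varepsilon\|z\|_X$ term forces $z\to 0$ along any approximate minimizer of a would-be nullvector; and $S_W \le \|\cdot\|_W$ holds term-by-term since $S_X\le\|\cdot\|_X$ and $S_Y\le\|\cdot\|_Y$, so $S_W$ is automatically $1$-Lipschitz with respect to $\|\cdot\|_W$.

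Next I would verify that $\iota_X$ and $\iota_Y$ are honest $0$-morphisms. The four upper bounds $\|(x,0)\|_W\le\|x\|_X$, $\|(0,y)\|_W\le\|y\|_Y$, $S_W(x,0)\le S_X(x)$, $S_W(0,y)\le S_Y(y)$ are immediate by setting $z=0$. The matching lower bounds are where the $\varepsilon$-morphism hypothesis enters. For $\iota_Y$, for every $z\in X$,
$$\|z\|_X+\|y-\phi(z)\|_Y+\varepsilon\|z\|_X \ge (1+\varepsilon)\|z\|_X + \|y\|_Y - \|\phi(z)\|_Y \ge \|y\|_Y,$$
using $\|\phi(z)\|_Y\le(1+\varepsilon)\|z\|_X$; and for $\iota_X$ one uses instead $\|\phi(z)\|_Y\ge(1-\varepsilon)\|z\|_X$ to cancel the $\|\phi(z)\|_Y$ contribution against the added $\varepsilon\|z\|_X$. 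The seminorm versions run verbatim with $S$ in place of $\|\cdot\|$ and with $(1-\varepsilon)S_X(z)\le S_Y(\phi(z))\le(1+\varepsilon)S_X(z)$. Finally, the bound on $\|\iota_Y\phi-\iota_X\|$ falls out by specialising $z=-x$ in the infimum for $\|(x,-\phi(x))\|_W$, which gives the explicit estimate $\|\iota_X(x)-\iota_Y(\phi(x))\|_W \le \varepsilon\|x\|_X$, hence $\|\iota_Y\circ\phi-\iota_X\|\le\varepsilon$.

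The one mildly awkward point is that the construction delivers $\|\iota_Y\circ\phi-\iota_X\|\le\varepsilon$ rather than the strict $<\varepsilon$ stated in the lemma. This is easy to patch by running the construction with a parameter $\varepsilon'\in(\varepsilon_0,\varepsilon)$ where $\varepsilon_0<\varepsilon$ is any value for which $\phi$ remains an $\varepsilon_0$-morphism; such an $\varepsilon_0$ exists whenever the $\varepsilon$-morphism inequalities for $\phi$ are strict, as they are in the paper's definition, and one extracts it by a compactness argument on the unit sphere of $X$ (finite-dimensional). Beyond this bookkeeping I do not foresee real obstacles: the structural challenge is simply to see that the $\varepsilon\|z\|_X$ and $\varepsilon S_X(z)$ correction terms are exactly what is needed so that the lower-bound computations cancel cleanly, while still making the ``diagonal'' element $(x,-\phi(x))$ inexpensive in $W$.
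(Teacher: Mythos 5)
Your proof is correct and takes essentially the same route as the paper: the paper simply invokes Lemma 2.1 of Kubi\'s--Solecki twice (once for the norms on $X\oplus Y$, once for the quotients by the seminorms, pulling the resulting amalgamation norm back through the projection to get $S_W$), and your explicit formulas for $\|(x,y)\|_W$ and $S_W(x,y)$ are exactly what that double application unwinds to. The only wrinkle is the $\le\varepsilon$ versus $<\varepsilon$ bookkeeping, which you already flag and which is equally present in the cited lemma.
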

Suppose for a moment that the lemma has been proved. Then the rest is done like the proof of the universality of the Gurarij space in \cite{KuSo} with $\varepsilon$-isometries replaced by $\varepsilon$-morphisms.
Let $X$ be a separable Banach space together with a $1$-Lipschitz seminorm $S_X:X\rightarrow \Rea^+_0$. Let $(X_n)_n$ be an increasing chain of finite dimensional subspaces of $X$ such that $\overline{\bigcup_n X_n}=X$. Denote by $S_n$ the restriction $S_X\upharpoonright X_n$. We inductively find linear embeddings $\phi_n: X_n\hookrightarrow \Gur$ so that
\begin{itemize}
\item $\phi_n:(S_n,X_n)\rightarrow (\mathbb{S},\Gur)$ is a $1/2^n$-morphism,
\item $\|\phi_{n+1}\upharpoonright X_n-\phi_n\|<1/2^{n-1}$.

\end{itemize}
Once this is done we take the point-wise limit $\phi:\bigcup_n X_n\hookrightarrow \Gur$. It uniquely extends to a linear isometric embedding still denoted by $\phi$ on $X$ with the property that for each $x\in X$ we have $|S_X(x)-\mathbb{S}(\phi(x))|<\varepsilon$, for every $\varepsilon>0$ thus $S_X(x)=\mathbb{S}(\phi(x))$. In particular, if $S_X$ is a distance function from a closed subspace $Y\subseteq X$, then for each $x\in X$ we have $x\in Y$ iff $\phi(x)\in \mathbb{H}$.

Let us now find such linear embeddings $\phi_n$'s. We assume that $X_1=\{0\}$. Suppose we have found a $1/2^n$-morphism $\phi_n:(S_n,X_n)\rightarrow (\mathbb{S},\Gur)$. Denote by $X'_n\subseteq \Gur$ the image $\phi_n(X_n)$, and by $\phi'_n:X'_n\rightarrow X_{n+1}$ the inverse $\phi_n^{-1}$ (composed with the inclusion $\subseteq X_n\rightarrow X_{n+1}$). $\phi'_n$ is also a $1/2^n$-morphism. Using Lemma \ref{Gurlemma} we can find a pair $(S_W,W)$ and $0$-morphisms $\iota_n:(\mathbb{S}\upharpoonright X'_n,X'_n)\rightarrow (S_W,W)$ and $\iota_{n+1}:(S_{n+1},X_{n+1})\rightarrow (S_W,W)$ such that $\|\iota_{n+1}\circ \phi'_n-\iota_n\|<1/2^n$. Then using the homogeneity property of $(\mathbb{S},\Gur)$ we can find a $1/2^{n+1}$-morphism $\psi:(S_W,W)\rightarrow (\mathbb{S},\Gur)$ such that $\psi\circ \phi_n=\mathrm{id}_{X'_n}$. The desired $1/2^{n+1}$-morphism $\phi_{n+1}:(S_{n+1},X_{n+1})\rightarrow (\mathbb{S},\Gur)$ is then the composition $\psi\circ \iota_{n+1}$.

It remains to prove Lemma \ref{Gurlemma}.
\begin{proof}[Proof of Lemma \ref{Gurlemma}.]
We refer the reader to the proof of Lemma 2.1 in \cite{KuSo} which is formulated precisely the same as Lemma \ref{Gurlemma}, just with $\varepsilon$-isometries instead of $\varepsilon$-morphisms.

Since being an $\varepsilon$-morphism means being an $\varepsilon$-isometry in both the norm and the seminorm we just use Lemma 2.1 from \cite{KuSo} twice. Going through the proof of that lemma we see that $W$ is $X\oplus Y$ with a suitable norm $\|\cdot\|'$ and $\iota_X$ and $\iota_Y$ are the canonical embeddings. We then apply Lemma 2.1 from \cite{KuSo} again for the quotient spaces $X_Q$ and $Y_Q$ (quotiented by their respective seminorms) to obtain a suitable norm $\|\cdot\|''$ on $X_Q\oplus Y_Q$. The desired seminorm $S_W$ is then the composition of projection from $X\oplus Y$ to $X_Q\oplus Y_Q$ with the norm $\|\cdot\|''$.
\end{proof}
\end{proof}
\begin{proof}[Proof of Theorem \ref{Guthm2}.]
Let us start by defining the appropriate Fra\" iss\' e class.
\begin{defin}
$\Age_2$ will be the class of triples of seminorms, projections and Banach spaces. We have that $(S_X,p_X,X)\in \Age_2$ if:
\begin{itemize}
\item $X$ is a finite dimensional vector space with a specified basis $(x_1,\ldots,x_n)$,
\item the norm $\norm$ on $X$ is of the form $\norm^A_X$, i.e. determined by a partial norm $\norm_A$. We again demand that $A$ is a finite subset of $X$ containing the basis such that each element of $A$ is a linear combination of elements of the basis using only rational scalars, and $\norm_A:A\rightarrow \Rat$ is a partial norm taking values only in the rationals. Moreover, we shall assume that each basis element $x_i$, $i\leq n$, is of norm $1$.
\item $S_X$ is again the (greatest) extension of some $1$-Lipschitz seminorm $S_A:A\rightarrow \Rat^+_0$ on $A$ taking only rational values.
\item $X$ is equipped with a norm one projection $p$ that is allowed to send basis elements only to rational linear combinations of basis elements, and moreover $Y_{S_X}=\{x\in X: S_X(x)=0\}=p_X(X)=\{x\in X:p_X(x)=x\}$; i.e. $S_X$ and $p_X$ are compatible.

\end{itemize}
We shall again consider only proper linear embeddings, i.e. those linear isometric embeddings between elements of $\Age_2$ that send elements of the basis to elements of basis, that are moreover $0$-morphisms and commute with projections. So an allowed embedding between $(S_X,p_X,X)$ and $(S_Y,p_Y,Y)$ is a $0$-morphism $\phi$ that sends specified basis elements of $X$ to specified basis elements of $Y$, and for every $x\in X$, $p_Y\circ\phi(x)=\phi\circ p_X(x)$.
\end{defin}
The verification that it is indeed a Fra\" iss\' e class is again based on the same arguments as in the proofs of Fact \ref{GurFraisse} and Theorem \ref{thm2}. The Fra\" iss\' e limit $G$ is again a direct limit of some spaces $X_1\to X_2\to\ldots$ that are equipped with the seminorms $s_1,s_2,\ldots$ and projections $p_1,p_2,\ldots$ that also extend to the limit and then to the completion $\Gur$. We shall denote this limit projection $P$ and its range by $\mathbb{H}$. The limit seminorm is equal to the seminorm $S_\Gur^{\mathbb{H}}$. In order to check the condition from the statement of Theorem \ref{Guthm2} it is again sufficient to prove the following claim.
\begin{claim}
For every $\varepsilon>\varepsilon'> 0$, every finite dimensional Banach spaces $E\subseteq F$, where 
\begin{itemize}
\item $E$ is equipped with a seminorm $S_E$ and a compatible norm one projection $p_E$ that both extend to $F$,
\item $E$ is of co-dimension $1$ in $F$, $F=\Span\{E,v\}$ and $v\in F\setminus E$ such that $\|v\|=1$,
\item $\varepsilon '<\min\{\varepsilon/2,\frac{\varepsilon\cdot\delta}{10}\}$, where  $\delta=\mathrm{dist}(v,E)=\inf\{\|v-x\|:x\in E\}$,

\end{itemize}
we have that any $\varepsilon '$-morphism $\phi :(S_E,E)\hookrightarrow (S_\Gur^{\mathbb{H}},\Gur)$ with the property that for every $x\in E$ we have $P\circ \phi(x)=\phi\circ p(x)$, extends to $\bar{\phi}\supseteq \phi:(S_F,F)\hookrightarrow (S_\Gur^{\mathbb{H}},\Gur)$ such that $\bar{\phi}$ is an $\varepsilon$-morphism with the analogous property.
\end{claim}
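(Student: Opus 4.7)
The plan is to mimic the proof of Claim \ref{Gurclaim2} verbatim, adding a norm-one projection $P_Z$ to the abstract one-dimensional extension $Z=\Span\{X_m,w\}$ of $X_m$. Given the $\varepsilon'$-morphism $\phi$ commuting with the projections, first approximate $\phi(e_1),\ldots,\phi(e_n)$ by rational vectors $g_1,\ldots,g_n\in X_m\subseteq G$ so that the linear map $\phi':E\to X_m$, $e_i\mapsto g_i$, is an $\varepsilon''$-morphism for some $\varepsilon''<\min\{\varepsilon/2,\varepsilon\delta/10\}$. Because $\phi$ already commutes with the projections, the $g_i$ can additionally be chosen so that $\phi'$ commutes with $p_E$ and $P$ exactly: approximate a basis of $\mathrm{range}(p_E)$ by rational vectors in $\mathrm{range}(P)\cap X_m$ first, then lift the remaining basis vectors of $E$ to $X_m$ and correct the lifts by elements of $\ker P$ so that $P(g_i)=\phi'(p_E(e_i))$ on the nose. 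Fix $R>0$ large and a rational $\eta$-net $\{z_j\}$ in $R\cdot B_E$, as in Claim \ref{Gurclaim2}.

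A short linear-algebraic analysis of the pair $(F,p_F)$ shows that, after replacing $v$ by $v-p_F(v)$ or by $p_F(v)$ (either of which still spans $F$ together with $E$), we may assume either Case~A, $p_F(v)=0$, or Case~B, $p_F(v)=v$. Form the abstract extension $Z=\Span\{X_m,w\}$ with $\|w\|=\|v\|$, set $P_Z(w):=0$ in Case~A and $P_Z(w):=w$ in Case~B, and extend linearly; this yields a projection $P_Z$ on $Z$ extending $P\upharpoonright X_m$. Enlarge $A$ to a rational set $\bar A\supseteq A\cup\{w,\phi'(z_j),\phi'(z_j)-w:j\}$ and assign rational values to the new entries that approximate the corresponding $F$-norms to within $\varepsilon''$ under the correspondence $e_i\leftrightarrow\phi'(e_i)$, $v\leftrightarrow w$. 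By Fact \ref{partialnormext} this extends to a norm $\|\cdot\|_Z$ on $Z$; set $S_Z:=\dist(\cdot,\mathrm{range}(P_Z))$, which is automatically compatible with $P_Z$.

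The main obstacle is to verify that $P_Z$ has operator norm exactly $1$ on $Z$, so that $(S_Z,P_Z,Z)$ really lies in the real version of $\Age_2$. The cleanest way is to interpret $\|\cdot\|_Z$ as the amalgamated norm of $X_m$ and (an $\varepsilon''$-perturbation of) $F$ over $E$, in the partial-norm sense used in the proof of Fact \ref{GurFraisse}: with this interpretation $P_Z$ is the amalgamation of the two norm-one projections $P$ on $X_m$ and $p_F$ on $F$, whose restrictions to $E$ agree modulo the $\varepsilon''$-slack absorbed by the rational approximation, and the amalgamation of norm-one projections over a common subspace is itself norm-one by the same argument that showed the amalgam in $\Age_2$ lies in $\Age_2$. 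Once this is verified, Fact \ref{extGur} applied to $\Age_2$ realizes the triple $(S_Z,P_Z,Z)$ inside some $X_{m'}\subseteq G$; defining $\bar\phi(v):=w$ and extending linearly gives the required map, the $\varepsilon$-morphism bounds follow verbatim from the $\eta$-net estimate of Claim \ref{Gurclaim2}, and the commutation $P\circ\bar\phi=\bar\phi\circ p_F$ follows from the commutation of $\phi'$ with the projections built into the first step together with the definition of $P_Z(w)$.
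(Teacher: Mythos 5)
Your proposal follows essentially the same route as the paper, which simply instructs the reader to repeat the proofs of Claims \ref{Gurclaim} and \ref{Gurclaim2} and ``additionally specify to where $w$ projects'' in analogy with Claim \ref{thm2claim1}; your Case A/B reduction (arranging $p_F(v)=0$ or $p_F(v)=v$ after replacing $v$ by $v-p_F(v)$ or by $p_F(v)$) and the amalgamation-of-norm-one-projections argument for $\|P_Z\|=1$ are precisely the details the paper leaves implicit. The only point to watch is that the rational values $\|\phi'(z_j)-w\|_{\bar A}$ must be chosen with the projection in mind (in particular no smaller than $\|P_Z(\phi'(z_j)-w)\|$), since an arbitrary choice within the $\varepsilon''$-window can push $\|P_Z\|$ slightly above $1$; this is exactly what taking the amalgam values and rounding upward, as your interpretation suggests, accomplishes.
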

That is again essentially the same as the proofs of Claim \ref{Gurclaim} and then Claim \ref{Gurclaim2}. We define the mapping $\phi '$ that goes from $E$ to some $X_m$ and sends basis elements of $E$ to rational linear combinations of basis elements of $X_m$. Then we again find an appropriate $\eta$-net in a large enough ball $R\cdot B_E$ which we may suppose contains $\phi'(e_i)$ for every $i\leq n=\mathrm{dim}(E)$. When defining the abstract extension $Z=\Span\{X_m,w\}$ of $X_m$, we only additionally specify to where $w$ projects. It is analogous as in the proof of Claim \ref{thm2claim1}.

The universality and uniqueness of $(\Gur,P)$ is then again a standard argument. Let us only mention that when proving the universality in the same way as in Theorem \ref{Guthm1} or in paper \cite{KuSo}, we need the following lemma which is an analog of Lemma \ref{Gurlemma}, resp. Lemma 2.1 from \cite{KuSo}.
\begin{lem}
Let $(S_X,p_X,X)$ and $(S_Y,p_Y,Y)$ be triples consisting of finite dimensional Banach spaces equipped with seminorms and compatible norm one projections. Let $\phi:(S_X,X)\rightarrow (S_Y,Y)$ be an $\varepsilon$-morphism, for some $\varepsilon>0$, with the property that  $p_Y\circ\phi=\phi\circ p_X$. Then there exist a finite dimensional $W$ with a seminorm $S_W$ and a compatible norm one projection $p_W$, and $0$-morphisms $\iota_X:(S_X,X)\rightarrow (S_W,W)$, resp. $\iota_Y:(S_Y,Y)\rightarrow (S_W,W)$, with the property that for $Q\in\{X,Y\}$ we have $$p_W\circ \iota_Q=\iota_Q\circ p_Q$$ and moreover we have $$\|\iota_Y\circ \phi-\iota_X\|<\varepsilon.$$
\end{lem}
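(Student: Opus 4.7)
The plan is to mirror the strategy used in the proof of Lemma \ref{Gurlemma}: apply Lemma 2.1 of \cite{KuSo} once at the level of norms to obtain an amalgam $W = X\oplus Y$ with a suitable norm $\|\cdot\|'$ and canonical embeddings $\iota_X,\iota_Y$, apply it a second time at the level of the quotients by the seminorms to produce the seminorm $S_W$ on $W$, and finally equip $W$ with the coordinate-wise projection. The first two steps are essentially the content of Lemma \ref{Gurlemma}, so the novel part of the argument will be installing the projection and verifying that it has norm one and is compatible with $S_W$.

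First, I would check that the second application of Lemma 2.1 of \cite{KuSo} is legitimate here: the hypothesis $p_Y\circ \phi = \phi\circ p_X$ combined with the compatibility of each $p_Q$ with $S_Q$ ensures that $\phi$ carries $\ker S_X = p_X(X)$ into $\ker S_Y = p_Y(Y)$, so $\phi$ descends to a well-defined $\varepsilon$-isometry between the quotient spaces $X/\ker S_X$ and $Y/\ker S_Y$. This yields $(W, \|\cdot\|', S_W)$ together with $0$-morphisms $\iota_X,\iota_Y$ satisfying $\|\iota_Y\circ \phi - \iota_X\| < \varepsilon$, exactly as in Lemma \ref{Gurlemma}.

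Second, I would define $p_W: W\to W$ coordinate-wise by $p_W(x,y) := (p_X(x), p_Y(y))$. By construction $p_W$ is idempotent and $p_W\circ \iota_X = \iota_X\circ p_X$, $p_W\circ \iota_Y = \iota_Y\circ p_Y$. Its image is $W_0 := p_X(X)\oplus p_Y(Y)$, which coincides with $\ker S_W$, since the latter is the kernel of the canonical quotient map $W\to (X/\ker S_X)\oplus (Y/\ker S_Y)$ used to define $S_W$; hence $p_W$ and $S_W$ are automatically compatible.

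The main obstacle is showing that $p_W$ has norm one with respect to $\|\cdot\|'$. The Kubi\'s--Solecki amalgam norm is expressible (up to standard reformulation) as an infimum over an auxiliary vector $z\in X$ of a sum of the shape $\|x-z\|_X + \|y + \phi(z)\|_Y$ plus an $\varepsilon$-weighted term. The idea is to exploit the commutation $p_Y\circ\phi = \phi\circ p_X$: replacing $z$ by $p_X(z)$ in any near-optimal expression for $\|(x,y)\|'$ yields an upper bound for $\|p_W(x,y)\|'$, because $p_X,p_Y$ are norm one and so each term satisfies $\|p_X(x-z)\|_X \leq \|x-z\|_X$, $\|p_Y(y + \phi z)\|_Y = \|p_Y y + \phi p_X z\|_Y \leq \|y + \phi z\|_Y$, and $\|p_X z\|_X \leq \|z\|_X$. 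Taking the infimum gives $\|p_W(x,y)\|' \leq \|(x,y)\|'$, so $p_W$ is contractive; since it restricts to the identity on $W_0$ its norm is exactly one. The remaining verifications are routine and run parallel to the proof of Lemma \ref{Gurlemma}.
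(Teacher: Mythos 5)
Your proposal is correct and takes essentially the same route as the paper, which simply invokes the proof of Lemma \ref{Gurlemma} (two applications of Lemma 2.1 of \cite{KuSo}, once for the norms and once for the seminorm quotients) and adds the coordinate-wise projection $p_W=p_X\oplus p_Y$. You in fact supply a verification the paper leaves implicit, namely that $p_W$ is contractive for the Kubi\' s--Solecki amalgam norm via the substitution $z\mapsto p_X(z)$ together with the commutation $p_Y\circ\phi=\phi\circ p_X$, and that $p_W(W)=\ker S_W$; both checks are sound.
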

We just copy the proof of Lemma \ref{Gurlemma}, where $W=X\oplus Y$. We only additionally define a projection $p_W$ on $W=X\oplus Y$ which is the sum $p_X\oplus p_Y:X\oplus Y\rightarrow p_X(X)\oplus p_Y(Y)$.
\end{proof}
\subsection{Final remarks and problems}
Consider the Urysohn space together with a universal closed subset $C\subseteq \Ur$ as guaranteed by Theorem \ref{thm1}. Can this universal closed subset be lifted to a universal subspace of $\Hol$? Resp. is $F(C)\subseteq \Hol$ a universal subspace? Using just Theorem \ref{Kalton} as in the proof of Theorem \ref{thm4cor} does not seem to work. Maybe a modification of Theorem \ref{Kalton} is needed.

The same question applies to the universal retraction on $\Ur$. Is the unique linear extension $\hat{R}:\Hol\rightarrow F(F_\Ur)$ of the universal retraction $R$ a universal projection on the Holmes onto a universal complemented subspace $F(F_\Ur)$? Again, the approach from the proof of Theorem \ref{thm4cor} that uses Theorem \ref{Kalton} does not seem to work directly.

\end{document}